\theoremstyle{thmstyleone}%
\newtheorem{theorem}{Theorem}
\newtheorem{lemma}[theorem]{Lemma}
\theoremstyle{thmstyletwo}%
\newtheorem{remark}{Remark}%
\theoremstyle{thmstylethree}%
\newtheorem{definition}{Definition}%
\begin{document}

\title[$L^2$-supercritical Kirchhoff equations in bounded domains]{Normalized solutions of $L^2$-supercritical Kirchhoff equations in bounded domains}


\author[]{\fnm{Qun} \sur{Wang}}\email{wq19980228@163.com}

\author*[]{\fnm{Xiaojun} \sur{Chang}}\email{changxj100@nenu.edu.cn}

\equalcont{These authors contributed equally to this work.}

\affil[]{\orgdiv{School of Mathematics and Statistics \& Center for Mathematics and Interdisciplinary Sciences}, \orgname{Northeast Normal University},  \city{Changchun}, \postcode{130024},  \country{China}}


\abstract{~In this paper, we investigate the existence of normalized solutions for the following nonlinear Kirchhoff type problem
\begin{equation*}
\begin{cases}
-(a+b\int_{\Omega}\vert\nabla u\vert^2dx)\Delta u+\lambda u=\vert u\vert^{p-2}u & \text{ in }\Omega,\\
u=0 & \text{ on }\partial\Omega
\end{cases}
\end{equation*}
subject to the constraint $\int_{\Omega}\vert u\vert^2dx=c$. Here, $a$ and $b$ are positive constants, $\Omega$ is a smooth bounded domain in $\mathbb{R}^N$ with $1\leq N\leq3$, $c>0$ is a prescribed value, and $\lambda\in \mathbb{R}$ is a Lagrange multiplier.
In the $L^2$-supercritical regime $2+\frac{8}{N}<p<2^*$, we establish the existence of mountain pass-type normalized solutions. Our approach relies on utilizing a parameterized version of the minimax theorem with Morse index information for constraint functionals, and developing a blow-up analysis for the nonlinear Kirchhoff equations. Furthermore, we explore the asymptotic behavior of these solutions as $b\rightarrow0$.}

\keywords{Normalized ground state solutions, nonlinear Schr\"odinger equations, exponential critical growth, constrained minimization method, Trudinger-Moser inequality}


\pacs[MSC Classification]{35J60, 35B09, 35B40,  47J30}

\maketitle

\section{Introduction and main results}\label{sec1}

Let $\Omega$ be a bounded domain in $\mathbb{R}^N (N=1,2,3)$ with a smooth boundary $\partial\Omega$. We study the following nonlinear Kirchhoff type problem
\begin{equation}\label{eq1}
\begin{cases}
-(a+b\int_{\Omega}\vert\nabla u\vert^2dx)\Delta u+\lambda u=\vert u\vert^{p-2}u & \text{ in }\Omega,\\
u=0 & \text{ on }\partial\Omega
\end{cases}
\end{equation}
under the constraint
\begin{equation}\label{eq2}
\int_{\Omega}\vert u\vert^2dx=c,
\end{equation}
where $a, b$ are positive constants, $c$ is a prescribed value, and $\lambda\in\mathbb{R}$ is a Lagrange multiplier, $2+\frac{8}{N}<p<2^*$(here $2^*=+\infty$ if $N=1,2$, and $2^*:=\frac{2N}{N-2}$ if $N=3$).

The problem described by equation (\ref{eq1}) is connected to the stationary analog of the equation
\[
\rho\frac{\partial^2u}{\partial t^2}-\bigg(\frac{P_0}{h}+\frac{E}{2L}\int_{0}^{L}\bigg\vert\frac{\partial u}{\partial x}\bigg\vert^2\bigg)\frac{\partial^2u}{\partial x^2}=0,
\]
where $u$ represents the displacement, $L$ is the length of the string, $h$ is the area of the cross-section, $E$ denotes the Young modulus of the material, $\rho$ is the mass density, and $P_0$ denotes the initial tension. This model, initially proposed by Kirchhoff in \cite{Kirchhoff-1883}, is recognized as a form of the classical D'Alembert wave equation. It describes the free vibration of elastic strings and characterizes the motions of moderately large amplitude. Beyond its physical applications, Kirchhoff problems also arise in the field of biological systems. In this context, $u$ describes a process that depends on its own average, such as population density. For further insights into the physical and mathematical background of Kirchhoff-type equations, the readers are referred to \cite{Arosio-1996,Chipot-1997,D’Ancona-1992,Pohozaev-1975} and the references therein.

Problem (\ref{eq1}) is often referred to as nonlocal due to the presence of the term $\int_{\Omega}\vert\nabla u\vert^2dx$. This implies that (\ref{eq1}) is no longer a pointwise identity, leading to mathematical difficulties and providing an especially interesting subject for study. In the last two decades, there has been intensive research on the existence and concentration phenomena for positive and nodal solutions of nonlinear Kirchhoff type problems, as seen in \cite{Deng-2015,Figueiredo-2014,He-2012,Li-2020,Liu-2022,PZ-2006,Zhang-2020}.


In this paper, we focus on the existence of normalized solutions for (\ref{eq1}), i.e., solutions of (\ref{eq1}) satisfying the constraint condition (\ref{eq2}).
To find solutions of (\ref{eq1}) with a prescribed $L^2$-norm, we seek critical points of the energy functional
\begin{equation}\label{eq3}
J(u)=\frac{a}{2}\int_{\Omega}\vert\nabla u\vert^2dx+\frac{b}{4}\bigg(\int_{\Omega}\vert\nabla u\vert^2dx\bigg)^2-\frac{1}{p}\int_{\Omega}\vert u\vert^pdx
\end{equation}
on the space of $L^2$-constrained functions
\begin{equation}\label{eq4}
\mathcal{S}_c:=\left\{u\in H^{1}_0(\Omega)\bigg\vert \ \int_{\Omega}\vert u\vert^2dx=c\right\}.
\end{equation}
In recent years, there have been many studies on normalized solutions of nonlinear Kirchhoff type equations when $\Omega=\mathbb{R}^N$.
In \cite{Ye-MMAS}, Ye
 established the existence of global minimizers of $J\vert_{\mathcal{S}_c}$ when $2<p<2+\frac{8}{N}$, and mountain pass-type critical points of $J\vert_{\mathcal{S}_c}$ when $2+\frac{8}{N}<p<2^*$. Later on, Luo and Wang \cite{Luo-2017} obtained the multiplicity of mountain pass solutions in the mass supercritical case when $N=3$. Subsequently, Qi and Zou \cite{Qi-2022} provided the exact number and expressions of positive normalized solutions. In \cite{He-2023}, He et al. investigated the existence of ground state normalized solutions for (\ref{eq1}) with general nonlinearities in the $L^2$-supercritical case, and they also studied the asymptotic behavior of these solutions as $c\to0$ and $c\to+\infty$ respectively. In a recent result, Feng, Liu and Zhang \cite{Feng-2023} proved the existence and multiplicity of normalized solutions for the case of combined nonlinearities, i.e., the nonlinear term is in the form of $\vert u\vert^{p-2}u+\vert u\vert^{q-2}u$. Additionally, other interesting results on normalized solutions have been explored and further developed in other contexts, for related works, we refer to \cite{Carri-2022, Cai-2023, Chen-2021, Cui-2022, Hu-2021, Li-2019, Li-Nie-2023, Li-R-2023, He-JGA-2023, Zeng-2017}.

When $\Omega$ is bounded, the existence of normalized solutions of nonlinear Kirchhoff type equations was also studied recently.
In \cite{Zhu-2021}, Zhu, Li and Liang considered the following Kirchhoff type transmission problem
\begin{equation*}
\begin{cases}
-k_1(\int_{\Omega_1}\vert\nabla u\vert^2dx)\Delta u=\lambda u+\vert u\vert^{p-2}u & \text{ in }\Omega_1,\\
-k_2(\int_{\Omega_2}\vert\nabla v\vert^2dx)\Delta v=\lambda v+\vert v\vert^{p-2}v & \text{ in }\Omega_2,\\
\int_{\Omega_1}u^2dx+\int_{\Omega_2}v^2dx=\mu^2,\\
v=0 & \text{ on }\Gamma,\\
u=v & \text{ on }\Sigma,\\
k_1(\int_{\Omega_1}\vert\nabla u\vert^2dx)\frac{\partial u}{\partial \nu}=k_2(\int_{\Omega_2}\vert\nabla v\vert^2dx)\frac{\partial v}{\partial \nu} & \text{ on }\Sigma,
\end{cases}
\end{equation*}
where $\Omega_1$ and $\Omega_2$ are two disjoint subdomains of a bounded domain $\Omega\subset\mathbb{R}^3$ with a smooth boundary $\Gamma=\partial\Omega$ such that $\Omega=\bar{\Omega}_1\cup\Omega_2$, $\Sigma=\partial\Omega_1=\bar{\Omega}_1\cup\bar{\Omega}_2\subset\Omega$ is smooth, $\partial\Omega_2=\Gamma\cup\Sigma$, and $\nu$ is the unit outward normal vector to $\Omega_2$ and inward to $\Omega_1$, and $k_i(s)=1+b_is^{m+1}$ for $s\in\mathbb{R}_+$. Here $b_i>0$ for $i=1,2$ and $m\in[1,3)$. The authors considered both the $L^2$-subcritical and critical situations, i.e., $p\in(2,2+\frac{4m}{3}]$, where they established the existence of ground-state normalized solutions within a certain range of $\mu$. In addition, in the $L^2$-supercritical and Sobolev critical case $(2+\frac{4m}{3}, 6]$, they identified a threshold value $\mu(m)>0$ such that for each $\mu\in(0,\mu(m))$, the problem has a normalized solution.
Furthermore, they examined the special case when $\Omega_1=\emptyset$ and $m=2$, leading to a single Kirchhoff equation in the form:
\begin{equation}\label{Kir}
\begin{cases}
-(1+b\int_{\Omega}\vert\nabla u\vert^2dx)\Delta u=\lambda u+\vert u\vert^{p-2}u & \text{ in }\Omega,\\
u=0 & \text{ in }\partial\Omega,\\
\int_{\Omega}u^2dx=\mu^2.
\end{cases}
\end{equation}
Their arguments revealed that, if $p\in (2, \frac{14}{3})$, problem (\ref{Kir}) admits at least a normalized solution for all $\mu>0$. For the case of $p\in[\frac{14}{3},6]$, there exists some $\mu^*>0$ such that (\ref{Kir}) has a local minimum normalized solution for $\mu\in(0,\mu^*)$.

We note that, the functional $J(u)$ is unbounded from below on the constraint set $\mathcal{S}_c$ when $p\in(2+\frac{8}{N},2^*)$. This implies that a mountain pass geometry structure emerges on $\mathcal{S}_c$. The natural question arising from this analysis is whether the Kirchhoff equation in bounded domains possesses a mountain pass-type normalized solution? The aim of the present paper is to provide an affirmative answer to this problem. In fact, for an even more general case, i.e., the bounded domain $\Omega$ belongs to $\mathbb{R}^N$ with $N=1,2,3$, when $p$ stays in the $L^2$-supercritical regime, we shall establish the existence of mountain pass-type normalized solutions of the nonlinear Kirchhoff equation (\ref{eq1}) when the mass $c$ satisfies $0<c<c^*$ for some $c^*>0$.

To obtain mountain pass-type normalized solutions, the key step is to seek a bounded (PS) sequence derived from the mountain pass geometry.
In most prior studies concerning mountain pass-type normalized solutions in $\mathbb{R}^N$, the associated Pohozaev identity serves as a crucial tool for establishing the boundedness of (PS) sequences, see \cite{Ye-MMAS,Luo-2017,Feng-2023} (see also \cite{J-1997, BS-2017, S2020-1,S2020-2} for the case of nonlinear Schr\"odinger equations). However, due to the presence of boundary terms in the corresponding Pohozaev identity in bounded domains, we cannot utilize the Pohozaev identity in the same manner as in the case of $\mathbb{R}^N$. Furthermore, another obstacle in studying mountain pass-type normalized solutions is that the bounded domains lack invariance under translations and dilations. Consequently, the scaling arguments commonly employed in the study of normalized solutions in the entire space do not apply to bounded domains.  Motivated by \cite{Chang-2022}, we will develop a minimax argument based on the monotonicity technique, together with some Morse index estimates and blow up analysis, to establish the existence of mountain pass-type normalized solutions for nonlinear Kirchhoff type equations in bounded domains.

 Our first main result reads as follows.

\begin{theorem}\label{thm:1}
Let $1\leq N\leq3$, $a,b>0$, $2+\frac{8}{N}<p<2^*$, and $\Omega\subset\mathbb{R}^N$ be a bounded domain with a smooth boundary $\partial\Omega$. Then, there exists $c^*>0$ such that, for any given $0<c\leq c^*$, the Kirchhoff equation (\ref{eq1}) with the mass constraint (\ref{eq2}) has a positive solution which corresponds to a mountain pass-type critical point of $J$ on $\mathcal{S}_c$.
\end{theorem}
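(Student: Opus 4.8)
The plan is to realize the claimed solution as a critical point of $J$ on $\mathcal{S}_c$ at a mountain pass level, obtained through a parameterized minimax scheme that simultaneously yields Morse index control, and to recover strong convergence from the compactness of the Sobolev embedding on the bounded domain.

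First I would record the mountain pass geometry of $J|_{\mathcal{S}_c}$. Setting $t=\|\nabla u\|_2$ and using the Gagliardo--Nirenberg inequality $\|u\|_p^p\le C_N\,t^{\gamma_p}c^{(p-\gamma_p)/2}$ with $\gamma_p=\frac{N(p-2)}{2}>4$ (this strict inequality is precisely the $L^2$-supercritical threshold), one gets $J(u)\ge \frac{a}{2}t^2+\frac{b}{4}t^4-K\,t^{\gamma_p}$ with $K=K(c)\to0$ as $c\to0$. Since $\gamma_p>4$, the right-hand side rises from near $0$ to a strictly positive maximum at some $t^*=t^*(c)$ and then decreases to $-\infty$. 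The one genuinely domain-specific point is that on a bounded $\Omega$ one has the Poincar\'e floor $t^2\ge\lambda_1(\Omega)\,c$, so the constraint set cannot reach $t=0$; however $\sqrt{\lambda_1 c}\to0$ while $t^*(c)\to+\infty$ as $c\to0$, so for all sufficiently small $c$ the floor lies well below the hump and the geometry survives. I would then fix endpoints $u_0,u_1\in\mathcal{S}_c$ with $\|\nabla u_0\|_2$ near the floor and $\|\nabla u_1\|_2$ so large that $J(u_1)<0$, and define $\gamma_c=\inf_{\Gamma}\max_{[0,1]}J$ over paths in $\mathcal{S}_c$ joining them; the estimate above gives $\gamma_c>\max\{J(u_0),J(u_1)\}$.

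The heart of the argument is extracting a bounded Palais--Smale sequence, and here the usual Pohozaev route fails because the Pohozaev identity on $\Omega$ carries the boundary term $\int_{\partial\Omega}(x\cdot\nu)|\partial_\nu u|^2\,dS$. Following the strategy of \cite{Chang-2022}, I would instead work with the truncated family $J_\mu(u)=\frac{a}{2}\|\nabla u\|_2^2+\frac{b}{4}\|\nabla u\|_2^4-\frac{\mu}{p}\|u\|_p^p$, $\mu\in[\delta,1]$, and apply a parameterized minimax theorem that produces, for almost every $\mu$, a bounded Palais--Smale sequence for $J_\mu|_{\mathcal{S}_c}$ at level $\gamma_c(\mu)$ carrying the extra Morse index bound (constrained index at most one). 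Because the embedding $H^1_0(\Omega)\hookrightarrow L^q(\Omega)$ is compact for every $q<2^*$ (in particular for our $p$), such a bounded sequence converges strongly along a subsequence to a critical point $u_\mu\in\mathcal{S}_c$ of $J_\mu$ with Lagrange multiplier $\lambda_\mu$ and Morse index at most one; the constraint $\|u_\mu\|_2^2=c$ is preserved precisely because $L^2$-convergence is compact on the bounded domain.

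It remains to send $\mu\to1$, and this is where I expect the main difficulty to lie and where the blow-up analysis enters. The Morse index bound, tested against the $L^2$-preserving dilation direction $w=\frac{N}{2}u_\mu+x\cdot\nabla u_\mu$ (which is tangent to $\mathcal{S}_c$ since $\int_\Omega u_\mu\,w\,dx=0$ after integrating by parts), together with the Euler--Lagrange equation, gives a second-order relation that controls $\lambda_\mu$ and pins down $\|\nabla u_\mu\|_2$ up to the boundary contribution. To rule out loss of compactness as $\mu\to1$ I would argue by contradiction: if $\|\nabla u_\mu\|_2\to\infty$, rescale $u_\mu$ around a concentration point and pass to a limit profile solving a Kirchhoff or limiting Schr\"odinger equation on $\mathbb{R}^N$ or a half-space; the Morse index at most one then forces this profile to be a sign-definite least-energy object, while Liouville/Pohozaev nonexistence on the limiting domain yields a contradiction, so $\{u_\mu\}$ stays bounded in $H^1_0(\Omega)$. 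A final compactness step produces a critical point $u$ of $J=J_1$ on $\mathcal{S}_c$ at level $\gamma_c$. Replacing $u$ by $|u|$ (legitimate since $J$ is even) and invoking the strong maximum principle delivers positivity, completing the proof; the asymptotics as $b\to0$ would follow from the same blow-up bookkeeping run with $b$ as the moving parameter.
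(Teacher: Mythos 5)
Your scaffolding coincides with the paper's: the perturbed family $J_\rho$, the parameterized minimax theorem of Borthwick--Chang--Jeanjean--Soave producing bounded Palais--Smale sequences with approximate Morse index control, compactness of the Sobolev embedding to solve the problem for almost every $\rho$, and a blow-up argument to pass to $\rho\to1$. However, the step where you actually derive the contradiction contains two genuine gaps. First, the test direction $w=\frac{N}{2}u_\mu+x\cdot\nabla u_\mu$ is inadmissible on a bounded domain: since $u_\mu=0$ on $\partial\Omega$, its tangential derivative vanishes there, so $\nabla u_\mu=(\partial_\nu u_\mu)\nu$ and $w\vert_{\partial\Omega}=(x\cdot\nu)\,\partial_\nu u_\mu\neq0$ in general; hence $w\notin H^1_0(\Omega)$ and it is not a legitimate direction in $T_{u_\mu}\mathcal{S}_c$. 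This is exactly the boundary obstruction that defeats the Pohozaev identity, which you yourself flagged -- it cannot be smuggled back in through the Morse index form. (Also, the index bound inherited by the limit points is $m(u_\rho)\leq2$, not $1$; this matters below.)

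Second, and more fundamentally, ``Liouville/Pohozaev nonexistence on the limiting domain'' cannot close the argument when the concentration point stays in the interior at the blow-up scale. With the correct Kirchhoff rescaling $U_n(y)=\lambda_n^{-1/(p-2)}u_n\bigl(\lambda_n^{-1/2}\sqrt{e_n}\,y+P_n\bigr)$, $e_n=\int_\Omega\vert\nabla u_n\vert^2dx$ (the extra factor $\sqrt{e_n}$ is needed to tame the nonlocal coefficient $a+be_n$), the limit profile solves $-b\Delta U+\tilde\lambda U=U^{p-1}$ in $\mathbb{R}^N$ with $\tilde\lambda\in(0,1]$, and this equation \emph{does} admit positive, decaying, finite-Morse-index solutions; nonexistence results only dispose of the half-space case and of $\tilde\lambda=0$ (and for $N=1$ even these require a separate moving-plane argument, since the Liouville-type theorems used for $2\le N\le3$ are unavailable). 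The paper's actual contradiction is a mass-counting argument, which is absent from your sketch: the identity $\bigl(\frac12-\frac1p\bigr)ae_n+\bigl(\frac14-\frac1p\bigr)be_n^2=\frac{c\lambda_n}{p}+c_n$ forces $e_n^2\sim\lambda_n$; the Morse index bound $m(u_n)\le2$ caps the number of blow-up points at two and yields exponential decay of $u_n$ away from them; consequently the rescaled mass $\lambda_n^{N/2-2/(p-2)}e_n^{-N/2}\int_\Omega u_n^2dx$ remains bounded (it equals the masses of the limit profiles up to an error $Ce^{-2\gamma R}$), while the same quantity equals $\lambda_n^{N/2-2/(p-2)}e_n^{-N/2}c\sim\lambda_n^{N/4-2/(p-2)}c\to+\infty$ precisely because $p>2+\frac{8}{N}$. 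Without this quantization step your contradiction never materializes, so the boundedness of $\{u_\mu\}$ -- the heart of the theorem -- is not established.
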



To prove Theorem \ref{thm:1}, we draw inspiration from \cite{BCJS-2023,Chang-2022}. Our approach involves utilizing a parameterized version of the minimax theorem, as introduced in \cite{Borthwick-2023}. We will undertake a blow-up analysis for the sequence of solutions, ensuring a consistent and bounded Morse index, specifically for the approximating nonlinear Kirchhoff equations. This process leads to obtaining normalized solutions for the original equation (\ref{eq1}).



To illustrate the idea, let's provide a brief outline of our methodology.
Consider the family of functionals $J_{\rho}:\mathcal{S}_c\rightarrow\mathbb{R}$ defined as follows:
$$J_\rho(u)=\frac{a}{2}\int_{\Omega}\vert\nabla u\vert^2dx+\frac{b}{4}\bigg(\int_{\Omega}\vert\nabla u\vert^2dx\bigg)^2-\frac{\rho}{p}\int_{\Omega}\vert u\vert^pdx,$$
where $\rho\in[\frac{1}{2},1]$. Firstly, by applying the abstract minimax theorem from \cite{Borthwick-2023}, we establish the existence of a mountain pass critical point $u_{\rho}$ of $J_{\rho}(\cdot)$ for almost every $\rho\in[\frac{1}{2},1]$. Subsequently, our goal is to take the limit of some solution sequence $\{u_{n}\}$ along a sequence $\rho_n\rightarrow1^-$ to identify mountain pass normalized solutions to the original problem. To accomplish this, we conduct a comprehensive blow-up analysis for solutions of the approximating nonlinear Kirchhoff equations.  We assume, by contradiction, that the $L^{\infty}$ norm of $\{u_n\}$ and thus the corresponding Lagrange multipliers $\{\lambda_n\}$ is unbounded. Through our analysis, we describe the local and global behavior of $\{u_n\}$ near the local maximum points (see Section 3). Subsequently, we derive a contradiction by utilizing the decay properties of $\{u_n\}$ away from these blow-up points and the unboundedness of $\{\lambda_n\}$. This contradiction, in turn, confirms the boundedness of $\{u_{n}\}$, allowing us to assert that ${u_{n}}$ converges to the mountain pass normalized solution of (\ref{eq1}).

It's worth noting that in \cite{Chang-2022}, a detailed blow-up analysis was conducted for sequences of solutions with bounded Morse index for nonlinear Schr\"odinger equations. However, the employed blow-up function, expressed as
\[
U_n(y)=\lambda_n^{-\frac{1}{p-2}}u_n(\lambda_n^{-\frac{1}{2}}y+P_n)
\]
where $P_n$ represents a local maximum point, proves insufficient for obtaining a priori pointwise estimates for nonlinear Kirchhoff equations. Given the presence of a nonlocal term, new challenges arise when dealing with the limiting equation. Inspired by \cite{He-2023}, we introduce another blow-up function represented as
\[
U_n(y)=\lambda_n^{-\frac{1}{p-2}}u_n(\lambda_n^{-\frac{1}{2}}\sqrt{e_n}y+P_n),
\]
with the term $e_n=\int_\Omega\vert\nabla u_n\vert^2dx$ playing a crucial role in the blow-up analysis of the approximating problems. Notably, the limit equation coincidentally transforms into the classical Schr\"odinger equation. However, the introduction of new blow-up functions brings about new challenges, and to address these issues, we will elucidate the relationship between $e_n$ and $\lambda_n$.

Additionally, while investigating the blow-up behavior of $u_n$ in the case of $N=1$, and aiming to establish the decay of $u_n$ away from the local
maximum points, it is crucial to demonstrate the satisfaction of the limit equation in the real line $\mathbb{R}^1$ with a positive linear term. However, in the scenario where $N=1$, the absence of Liouville theorems, unlike the cases when $2 \leq N \leq 3$, poses a challenge. This challenge will be overcome by employing the moving plane method.

In the subsequent analysis, we investigate the asymptotic behavior of the positive solution $u$ obtained in Theorem \ref{thm:1} as $b$ approaches 0. Let $u$ be denoted as $u_b$, and correspondingly, we use $\lambda_b$ for the Lagrange multiplier and $c_b$ for the energy level. The following result is derived.

\begin{theorem}\label{thm:2}
Let $u_b\subset \mathcal{S}_c$ be the positive normalized solution of (\ref{eq1}) obtained by Theorem \ref{thm:1}. Then $u_b\rightarrow u_0$ in $H^1_0(\Omega)$ as $b\rightarrow0$ up to a subsequence, where $u_0\subset \mathcal{S}_c$ is a positive normalized solution of
\begin{equation}\label{e2}
\begin{cases}
-a\Delta u+\lambda u=\vert u\vert^{p-2}u & \text{ \rm{in} }\Omega,\\
u=0 & \text{ \rm{in} }\partial\Omega.
\end{cases}
\end{equation}
\end{theorem}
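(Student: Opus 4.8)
The plan is to study the asymptotic limit $b\to0$ of the solutions $u_b$ produced by Theorem~\ref{thm:1}, and to identify the limit as a normalized solution of the $b=0$ Schr\"odinger problem~\eqref{e2}. First I would record the equation that $u_b$ satisfies: for each $b>0$,
\begin{equation*}
-\Bigl(a+b\int_{\Omega}|\nabla u_b|^2\,dx\Bigr)\Delta u_b+\lambda_b u_b=|u_b|^{p-2}u_b,\qquad \int_{\Omega}|u_b|^2\,dx=c,
\end{equation*}
with energy level $c_b=J(u_b)$ and Morse index controlled (uniformly in $b$) by the minimax construction behind Theorem~\ref{thm:1}. The crucial first step is a uniform-in-$b$ energy and gradient bound: I expect the minimax value $c_b$ to be bounded above as $b\to0$ (for instance by comparing with the path used to define the mountain pass level, whose $b$-dependent terms only decrease as $b\to0$), and this, together with the constraint and the equation tested against $u_b$, should yield a bound $\int_{\Omega}|\nabla u_b|^2\,dx\le C$ independent of $b$ for small $b$. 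Here one uses that $p>2+\tfrac{8}{N}$ and the structure of $J$ to absorb the nonlinear term via the Gagliardo--Nirenberg inequality on the fixed bounded domain $\Omega$.

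With the uniform gradient bound in hand, the nonlocal coefficient $a+b\int_\Omega|\nabla u_b|^2\,dx\to a$ as $b\to0$, so the equation degenerates in a controlled way into a genuine Schr\"odinger equation. Next I would show the Lagrange multipliers $\lambda_b$ stay bounded: testing the equation with $u_b$ gives
\begin{equation*}
\lambda_b c=\int_{\Omega}|u_b|^{p}\,dx-\Bigl(a+b\int_{\Omega}|\nabla u_b|^2\,dx\Bigr)\int_{\Omega}|\nabla u_b|^2\,dx,
\end{equation*}
and every term on the right is bounded by the previous step (using Sobolev embedding to control $\int_\Omega|u_b|^p$). Along a subsequence $b\to0$ we then extract $u_b\rightharpoonup u_0$ weakly in $H^1_0(\Omega)$ and $\lambda_b\to\lambda_0\in\mathbb{R}$, with $u_b\to u_0$ strongly in $L^q(\Omega)$ for all $q<2^*$ by the compact embedding on the bounded domain. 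Passing to the limit in the weak formulation, the nonlocal factor converges to $a$ (using the convergence of $\int_\Omega|\nabla u_b|^2$ along the subsequence), so $u_0$ solves~\eqref{e2} weakly with multiplier $\lambda_0$. The $L^q$ convergence preserves the mass constraint, giving $u_0\in\mathcal{S}_c$, and positivity of $u_0$ follows from the uniform positivity/decay estimates (via the strong maximum principle applied to the limit problem, after ruling out $u_0\equiv0$).

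The main obstacle is upgrading weak convergence to strong convergence in $H^1_0(\Omega)$ and, simultaneously, excluding the trivial limit $u_0\equiv0$. For strong convergence I would subtract the limit equation from the equation for $u_b$, test the difference against $u_b-u_0$, and show the gradient term converges, using the $L^q$-strong convergence to handle the nonlinearity $|u_b|^{p-2}u_b$ and the boundedness of $\lambda_b$ and of the nonlocal coefficient; the delicate point is that the nonlocal term $b\int_\Omega|\nabla u_b|^2\,dx$ multiplies $-\Delta u_b$, but since it tends to $0$ its contribution vanishes in the limit, and one is left with an inequality forcing $\int_\Omega|\nabla(u_b-u_0)|^2\to0$. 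To prevent vacuum I would use the lower bound on the mountain pass energy level $c_b$ inherited from the minimax geometry: a uniform lower bound $c_b\ge\delta>0$ (or equivalently a uniform lower bound on $\int_\Omega|\nabla u_b|^2$ coming from the constraint that $u_b$ sits above the mountain pass threshold) forces $u_0\ne0$, hence $u_0\in\mathcal{S}_c$ is a genuine nontrivial normalized solution. Assembling these pieces yields $u_b\to u_0$ strongly in $H^1_0(\Omega)$ with $u_0$ a positive normalized solution of~\eqref{e2}, completing the proof.
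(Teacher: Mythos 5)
Your overall skeleton (energy bound from comparing minimax paths, bounded multipliers, weak convergence, strong convergence by testing the difference of the equations, which is the paper's compactness Lemma \ref{lem:conv}) is the right endgame, but the argument breaks exactly at the step you yourself call crucial: deducing a uniform bound on $\int_\Omega|\nabla u_b|^2\,dx$ from the bounded energy level together with the tested equation and Gagliardo--Nirenberg. In the $L^2$-supercritical regime this deduction fails. Testing the equation against $u_b$ gives
\begin{equation*}
\bigl(a+b e_b\bigr)e_b+\lambda_b c=\int_\Omega|u_b|^p\,dx,\qquad e_b:=\int_\Omega|\nabla u_b|^2\,dx,
\end{equation*}
and combining this with $c_b=J(u_b)$ yields the identity
\begin{equation*}
\Bigl(\frac12-\frac1p\Bigr)a e_b+\Bigl(\frac14-\frac1p\Bigr)b e_b^2=\frac{c\lambda_b}{p}+c_b,
\end{equation*}
which contains the unknown Lagrange multiplier $\lambda_b$. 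Boundedness of $c_b$ therefore only says that $e_b$ and $\lambda_b$ blow up together or not at all; it does not exclude the scenario $\lambda_b\to+\infty$ and $e_b\to+\infty$ simultaneously. Nor can Gagliardo--Nirenberg ``absorb'' the nonlinearity: since $p>2+\frac{8}{N}$ implies $\frac{N(p-2)}{4}>2$, the term $\int_\Omega|u_b|^p\,dx$ grows faster than $be_b^2$ (whose coefficient moreover vanishes as $b\to0$), so $J$ is unbounded from below on $\mathcal{S}_c$ and its sublevel sets are unbounded. There is simply no a priori bound coming from the energy alone; this is the core difficulty of the whole paper, and the reason the Pohozaev identity (unusable in bounded domains) must be replaced by something else.

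What the paper actually does in Section \ref{sec:6} to fill this hole is: prove $c_b$ is nondecreasing in $b$ (essentially your path-comparison idea, which is correct), conclude from the identity above and $p>4$ that $\lambda_b$ is bounded below, and then rule out $\lambda_{b_n}\to+\infty$ by contradiction using the Morse index bound $m(u_{b_n})\le2$ inherited from Theorem \ref{thm:1} together with a full blow-up analysis. If $\lambda_n\to+\infty$ then $e_n\to+\infty$, and one distinguishes: Case (i) $b_ne_n\to e\ge0$, treated by rescaled limits and exponential decay away from at most two concentration points (Lemmas \ref{lem:reblow1} and \ref{lem:reblow2}, parallel to Section \ref{sec:4}), where the fixed mass $\int_\Omega u_n^2\,dx=c$ is shown to be incompatible with the concentration-plus-decay picture; and Case (ii) $b_ne_n\to+\infty$, where the rescaled limit equation degenerates to $\check\lambda\tilde V=\tilde V^{p-1}$ with no Laplacian at all, which is impossible. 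Your proposal contains none of this Morse-index/blow-up input, and without it the boundedness of $\{u_b\}$, and hence everything downstream, is unsupported. A minor further remark: once $u_b\to u_0$ strongly in $L^2(\Omega)$, the constraint $\int_\Omega u_0^2\,dx=c>0$ already rules out $u_0\equiv0$, so your separate ``no-vacuum'' argument via a lower bound on $c_b$ is unnecessary.
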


\begin{remark}\label{remark}
For the case in $\mathbb{R}^N$, Feng, Liu and Zhang \cite{Feng-2023} investigated the asymptotic behavior of normalized solutions as $b\to0$, in which the key tool is to construct a Pohozaev manifold. However, their arguments are not applicable in our case since the appearance of some boundary terms in the corresponding Pohozaev identity.
\end{remark}

We now display the main strategy in proving Theorem \ref{thm:2}. First and foremost, it is crucial to observe that the energy $c_b$ is nondecreasing with respect to $b\in(0,1]$.
Next, we investigate the asymptotic behavior by adapting the proof strategy used in Theorem \ref{thm:1}.
To perform the blow-up analysis as $b_n\to0$, we assume, to the contrary, that $\{u_{b_n}\}$ is unbounded, resulting in the divergence of $\{e_n\}$ to infinity. This, combined with the fact that $b_n\to0$, leads to two scenarios: either the sequence $\{b_ne_n\}$ is bounded or unbounded. Following our earlier discussions, we employ a blow-up argument or utilize the blow-up function introduced in Section \ref{sec:4} to produce a contradiction.

\indent The remaining part of this paper is organized as follows. In Section \ref{sec:3}, we present the existence of mountain pass-type normalized solutions for the approximating problems. In Section \ref{sec:4}, we shall give the blow-up analysis.  Then we complete in Section \ref{sec:5} the proof of Theorems \ref{thm:1}. Finally, in Section \ref{sec:6} we justify the asymptotic behavior result. Throughout this paper, 
$C$ denotes different positive constants whose exact values are not essential to the exposition of arguments.



\section{Mountain pass solutions for approximating problems}\label{sec:3}
This section is devoted to the following perturbed Kirchhoff type problem
\begin{equation}\label{2.1}
\begin{cases}
-\big(a+b\int_{\Omega}\vert\nabla u\vert^2dx\big)\Delta u+\lambda u=\rho\vert u\vert^{p-2}u& \text{ in }\Omega,\\
u=0 & \text{ on }\partial\Omega, \\
\int_{\Omega}\vert u\vert^2dx=c,
\end{cases}
\end{equation}
where $\rho\in [\frac{1}{2}, 1]$. We shall prove that, for almost $\rho\in [\frac{1}{2}, 1]$, problem (\ref{2.1}) admits a mountain pass-type normalized solution, along with some Morse index information.

In the following, we show that the perturbed functional $J_\rho(u)$ has a mountain pass geometry on $\mathcal{S}_c$ uniformly with respect to $\rho\in[\frac{1}{2},1]$.
\begin{lemma}\label{lem:mimax g}
There exists $c^*>0$ such that for each $c\in(0,c^*]$, there exist $w_1,w_2\in \mathcal{S}_c$ independent of $\rho$, such that
\[
c_{\rho}:=\inf_{\gamma\in\Gamma}\max_{t\in(0,1]}J_{\rho}(\gamma(t))>\max\{J_{\rho}(w_1),J_{\rho}(w_2)\}, \forall\rho\in\bigg[\frac{1}{2},1\bigg],
\]
where $\Gamma=\{\gamma\in C([0,1],\mathcal{S}_c)\vert\gamma(0)=w_1,\gamma(1)=w_2\}.$
\end{lemma}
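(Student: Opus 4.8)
The plan is to exhibit a classical mountain pass landscape on $\mathcal{S}_c$ by tracking how $J_\rho$ depends on the Dirichlet energy $\|\nabla u\|_2^2$, which acts as a dilation parameter. Two ingredients are needed: a lower \emph{barrier} valid on every admissible path, furnished by the Gagliardo--Nirenberg inequality, and a distinguished endpoint of very negative energy, produced by an \emph{interior} rescaling that respects the bounded domain. The whole argument is elementary one-variable analysis once these two objects are in place, and the supercriticality $p>2+\tfrac8N$ enters exactly at the right spot.

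First I would fix a profile: choose $x_0\in\Omega$ and $r>0$ with $B_r(x_0)\subset\subset\Omega$, and fix $\phi\in C_c^\infty(B_r(x_0))$ with $\int_\Omega|\phi|^2\,dx=1$. Set $w_1:=\sqrt{c}\,\phi\in\mathcal{S}_c$, so that $\|\nabla w_1\|_2^2=c\|\nabla\phi\|_2^2$ and, since $p/2>1$, $J_\rho(w_1)\to0$ as $c\to0$ uniformly in $\rho\in[\tfrac12,1]$ (dropping the nonpositive $L^p$-term gives a $\rho$-independent upper bound). For the second endpoint I would use the scaling $w_t(x):=t^{N/2}w_1(t(x-x_0)+x_0)$ for $t\ge1$; since $\mathrm{supp}\,w_t\subset B_{r/t}(x_0)\subset\Omega$, each $w_t$ lies in $\mathcal{S}_c\subset H^1_0(\Omega)$, and a direct computation gives
\[
J_\rho(w_t)=\frac{a}{2} t^2\|\nabla w_1\|_2^2+\frac{b}{4} t^4\|\nabla w_1\|_2^4-\frac{\rho}{p}\,t^{\frac{N(p-2)}{2}}\|w_1\|_p^p.
\]
Because $2+\tfrac8N<p$ forces $\tfrac{N(p-2)}{2}>4$, the last term dominates the nonlocal term (of exponent $4$); using $\rho\ge\tfrac12$, it follows that $J_\rho(w_t)\to-\infty$ as $t\to\infty$ uniformly in $\rho$, so I would fix $T$ large and set $w_2:=w_T$, achieving $J_\rho(w_2)<0$ for all $\rho\in[\tfrac12,1]$. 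Crucially, $w_1$ and $w_2$ depend only on $\phi,x_0,c,T$, not on $\rho$.

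The barrier comes from the bound, valid for all $u\in\mathcal{S}_c$ and all $\rho\le1$,
\[
J_\rho(u)\ge\frac{a}{2}\|\nabla u\|_2^2-\frac{C_{GN}}{p}\,c^{\frac{p(1-\gamma)}{2}}\|\nabla u\|_2^{\,p\gamma}=:g(\|\nabla u\|_2),\qquad \gamma=\frac{N(p-2)}{2p},
\]
where $p\gamma=\tfrac{N(p-2)}{2}>2$. The one-variable function $g(s)=\tfrac a2 s^2-C''s^{p\gamma}$, with $C''\propto c^{p(1-\gamma)/2}$, attains a positive maximum at an explicit $s_*=s_*(c)$, namely $g(s_*)=\tfrac a2 s_*^2\bigl(1-\tfrac{2}{p\gamma}\bigr)$; as $c\to0$ one has $C''\to0$, hence $s_*(c)\to\infty$ and $g(s_*)\to+\infty$. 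I would then pick $c^*>0$ small enough that, for $0<c\le c^*$, both $\|\nabla w_1\|_2=\sqrt c\,\|\nabla\phi\|_2<s_*$ and $g(s_*)$ exceeds the ($\rho$-uniform) upper bound for $J_\rho(w_1)$, and choose $T$ so large that in addition $\|\nabla w_2\|_2>s_*$. To conclude, for any $\gamma\in\Gamma$ the map $t\mapsto\|\nabla\gamma(t)\|_2$ is continuous and runs from $\|\nabla w_1\|_2<s_*$ to $\|\nabla w_2\|_2>s_*$, so it equals $s_*$ at some $t^*\in(0,1)$; then $\max_{t\in(0,1]}J_\rho(\gamma(t))\ge J_\rho(\gamma(t^*))\ge g(s_*)$, and taking the infimum over $\Gamma$ yields $c_\rho\ge g(s_*)>\max\{J_\rho(w_1),J_\rho(w_2)\}$ for every $\rho\in[\tfrac12,1]$.

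I expect the only genuinely delicate point to be the bounded-domain obstruction already flagged in the introduction: the full-space dilation $t\star u$ used for normalized problems on $\mathbb{R}^N$ does not preserve $H^1_0(\Omega)$, so the low-energy endpoint must instead be built by an interior rescaling of a compactly supported profile, which is admissible only for $t\ge1$. Everything else --- the uniformity in $\rho$, the fact that the nonlocal term (scaling exponent $4$) is overtaken by $\tfrac{N(p-2)}{2}$ precisely in the $L^2$-supercritical regime, and the strict separation of the minimax level from the endpoints --- follows once the barrier $g$ and the two endpoints are fixed.
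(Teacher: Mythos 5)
Your proposal is correct and follows essentially the same route as the paper: a Gagliardo--Nirenberg barrier on an intermediate Dirichlet-energy level set (your $s_*$ plays the role of the paper's sphere $\partial B_{c\alpha_0}$), a low-energy bump $w_1$, a concentrated bump $w_2$ (your $w_t$ is exactly the paper's rescaled profile $u_k$ borrowed from Noris--Tavares--Verzini), and a continuity crossing argument, all uniform in $\rho$ since $\rho\in[\tfrac12,1]$. The only step the paper includes that you omit is the routine verification that $\Gamma\neq\emptyset$, done there via the normalized segment $t\mapsto \sqrt{c}\,\frac{(1-t)w_1+tw_2}{\Vert (1-t)w_1+tw_2\Vert_{L^2(\Omega)}}$, which is well defined because both endpoints are nonnegative; you should add this, e.g.\ by choosing your profile $\phi\geq 0$.
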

\begin{proof}
From the Gagliardo-Nirenberg inequality, we obtain for every $u\in \mathcal{S}_c$,
\begin{eqnarray}
J_{\rho}(u)
&=&\frac{a}{2}\int_{\Omega}\vert\nabla u\vert^2dx+\frac{b}{4}\bigg(\int_{\Omega}\vert\nabla u\vert^2dx\bigg)^2-\frac{\rho}{p}\int_{\Omega}\vert u\vert^pdx\nonumber\\
&\ge&J(u)\nonumber\\
&\geq&\frac{a}{2}\int_{\Omega}\vert\nabla u\vert^2dx+\frac{b}{4}\bigg(\int_{\Omega}\vert\nabla u\vert^2dx\bigg)^2\nonumber\\
&&-\frac{1}{p}C_pc^{\frac{2N-p(N-2)}{4}}\bigg(\int_{\Omega}\vert\nabla u\vert^2dx\bigg)^{\frac{N(p-2)}{4}}.\label{1-27-1}
\end{eqnarray}

Denote by $\lambda_1(\Omega)$ the first Dirichlet eigenvalue of $(-\Delta, H_0^1(\Omega))$. For any $\alpha\geq\lambda_1(\Omega)$, we define
\begin{eqnarray*}
B_{c\alpha}=\{u\in \mathcal{S}_c\vert\int_{\Omega}\vert\nabla u\vert^2dx<c\alpha\},~~ \partial B_{c\alpha}=\{u\in \mathcal{S}_c\vert\int_{\Omega}\vert\nabla u\vert^2dx= c\alpha\}.
\end{eqnarray*}
Then, the sets
$B_{c\alpha}$ and $\partial B_{c\alpha}$ are both nonempty. In fact, we define the function
\[
\varphi_\alpha(x)=\sqrt{c}\left(\frac{\alpha}{\lambda_1(\Omega)}\right)^{\frac{N}{4}}\varphi_1\left(\sqrt{\frac{\alpha}{\lambda_1(\Omega)}}x\right),
\]
where $\varphi_1$ is the corresponding eigenfunction of $\lambda_1$ such that $\|\varphi_1\|_{L^2(\Omega)}^2=1$.
Clearly, $\varphi_{\frac{\alpha}{2}}\in B_{c\alpha}$ and $\varphi_{\alpha}\in \partial B_{c\alpha}$.

Set \[
c^*=(8\lambda_1(\Omega))^{\frac{N(p-2)-4}{2(2-p)}}\bigg(\frac{ap}{2C_p}\bigg)^{\frac{2}{p-2}}
\]
and
\[
\alpha_0=\frac{1}{2}\bigg(\frac{ap}{2C_p}\bigg)^{\frac{4}{N(p-2)-4}}c^{\frac{2(2-p)}{N(p-2)-4}}.
\]
Clearly, for any
$0<c\leq c^*$, one can easily see $\alpha_0\geq4\lambda_1(\Omega)$. By $p>2+\frac{8}{N}$, we deduce $\frac{N(p-2)-4}{4}>1$. Then, by (\ref{1-27-1}) we get, for any $\rho\in[\frac{1}{2},1]$,
\begin{eqnarray*}
\inf_{u\in\partial B_{c\alpha_0}}J_{\rho}(u)&\ge& \inf_{u\in\partial B_{c\alpha_0}}J(u)\\ &\ge&\frac{a}{2}c\alpha_0+\frac{b}{4}c^2\alpha_0^2-\frac{1}{p}C_pc^{\frac{2N-p(N-2)}{4}}\bigg(c\alpha_0\bigg)^{\frac{N(p-2)}{4}}\\
&=&\bigg[1-\bigg(\frac{1}{2}\bigg)^{\frac{N(p-2)-4}{4}}\bigg]\frac{a}{2}c\alpha_0+\frac{b}{4}c^2\alpha_0^2\\
&\ge&\frac{a}{4}c\alpha_0+\frac{b}{4}c^2\alpha_0^2:=c\beta>0.
\end{eqnarray*}

Let $w_1=\varphi_{\frac{\alpha_0}{4}}$. Then $w_1\in \partial B_{\frac{c\alpha_0}{4}}$.
Moreover, for any $\rho\in [\frac{1}{2},1]$, we have
\begin{eqnarray}\label{mimaxg2}
J_{\rho}(w_1)
&\leq&\frac{a}{2}\int_{\Omega}\vert\nabla w_1\vert^2dx+\frac{b}{4}\bigg(\int_{\Omega}\vert\nabla w_1\vert^2dx\bigg)^2\nonumber\\
&=&\frac{a}{8}c\alpha_0+\frac{b}{64}c^2\alpha_0^2\nonumber\\
&\le&\frac{1}{2}c\beta.
\end{eqnarray}

Now we borrow the idea in \cite[Lemma 3.1]{NTV2019} to construct $w_2$.
 Let $\phi\in C_0^{\infty}(B_1(0))$ with $\phi>0$ in $B_1(0)$ and $\int_{B_1(0)}\phi^{2}dx=1$, and $x_1\in \Omega$. For $k\in \mathbb{N}$, we define
\begin{eqnarray*}\label{mimaxg}
u_k(x)=c^{\frac{1}{2}}k ^{\frac{N}{2}}\phi(k(x-x_1)),x\in \Omega.
\end{eqnarray*}
Obviously, supp$(u_k)\subset B_{\frac{1}{k}}(x_1)\subset\Omega$ for $k$ sufficiently large, so that $u_k\in \mathcal{S}_c$.
By a straightforward calculation, it follows
\[
\vert\vert u_k\vert \vert_{H^1_0(\Omega)}=\sqrt{c}\bigg(k^2\int_{B_1(0)}\vert\nabla\phi\vert^2dx+1\bigg)^{\frac{1}{2}} \to+\infty
\]
and
\begin{eqnarray*}
J_{\rho}(u_k)&\le &J_{\frac{1}{2}}(u_k)\\
&=&\frac{a}{2}ck^2\int_{B_1(0)}\vert\nabla\phi\vert^2dx+\frac{b}{4}c^2k^4\bigg(\int_{B_1(0)}\vert\nabla\phi\vert^2dx\bigg)^2\\
&-&\frac{1}{2p}c^{\frac{p}{2}}k^{\frac{N(p-2)}{2}}\int_{B_1(0)}\vert\phi\vert^{p}dx\\
&\to&-\infty
\end{eqnarray*}
as $k\to+\infty$ since $2+\frac{8}{N}<p<2^*$. So we can take $k_0>0$ large enough, independent of $\rho$, such that, letting $w_2=u_{k_0}$,
 \begin{equation}\label{mimaxg3}
\int_{\Omega}\vert\nabla w_2\vert^2dx>2c\alpha_0 \ \ \text{and} \ \ J_\rho (w_2)\le J_{\frac{1}{2}}(w_2)<0, \forall \rho\in\bigg[\frac{1}{2}, 1\bigg].
\end{equation}

At this point, letting $\Gamma$ and $c_\rho$ be defined as in the statement of the lemma for our choice of $w_1$ and $w_2$, then $\Gamma\ne \emptyset$ holds since
\[
\gamma_0(t)=\frac{\sqrt{c}}{\vert\vert~(1-t)w_1+tw_2\vert\vert_{L^2(\Omega)}}[(1-t)w_1+tw_2], \ \ \ t\in[0,1]
\]
belongs to $\Gamma$. Combining (\ref{mimaxg2}) with (\ref{mimaxg3}), by continuity, for any $\gamma\in\Gamma$ there exists $t_{\gamma}\in[0,1]$ such that $\gamma(t_{\gamma})\in\partial B_{c\alpha_0}$. Therefore, for any $\rho\in[\frac{1}{2},1]$ and $\gamma\in\Gamma$, we have
$$\max_{t\in[0,1]}J_{\rho}(\gamma(t))\geq J_{\rho}(\gamma(t_{\gamma}))\geq\inf_{u\in\partial B_{c\alpha_0}}J(u)\geq c\beta,$$
and thus $c_{\rho}\geq c\beta$. Using (\ref{mimaxg2}) and (\ref{mimaxg3}) it follows that $\max\{J_{\rho}(w_1),J_{\rho}(w_2)\}\leq\frac{c\beta}{2}$, which completes the proof.
\end{proof}

Fix any $\rho\in[\frac{1}{2},1]$. We have the following compactness result.
\begin{lemma}\label{lem:conv}
Let $\{u_n\}\subset \mathcal{S}_c$ be a bounded Palais-Smale sequence of $J_{\rho}\vert_{\mathcal{S}_c}$. Then there exists $u\in \mathcal{S}_c$ such that, up to subsequence, $u_n\to u$ strongly in $H^1_0(\Omega)$.
\end{lemma}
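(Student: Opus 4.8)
The plan is to prove Lemma~\ref{lem:conv}, the compactness of bounded Palais--Smale sequences for $J_\rho|_{\mathcal{S}_c}$. First I would extract a weak limit: since $\{u_n\}$ is bounded in $H^1_0(\Omega)$, up to a subsequence we have $u_n\rightharpoonup u$ in $H^1_0(\Omega)$, and by the compact Sobolev embedding $H^1_0(\Omega)\hookrightarrow L^q(\Omega)$ (valid for $2\le q<2^*$ on the bounded domain $\Omega$ with $1\le N\le 3$) we get $u_n\to u$ strongly in $L^q(\Omega)$ for every admissible $q$, and $u_n\to u$ a.e.\ in $\Omega$. In particular $u_n\to u$ strongly in $L^2(\Omega)$, so $\int_\Omega |u|^2\,dx=c$ and hence $u\in\mathcal{S}_c$; this already disposes of the usual difficulty of $L^2$-mass escaping, which on a bounded domain simply cannot happen.

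Next I would extract the Lagrange multiplier. Because $\{u_n\}$ is a Palais--Smale sequence for the \emph{constrained} functional $J_\rho|_{\mathcal{S}_c}$, the free derivative $J_\rho'(u_n)$ is asymptotically parallel to the constraint gradient; that is, there exist $\lambda_n\in\mathbb{R}$ with
\[
J_\rho'(u_n)+\lambda_n u_n \to 0 \quad\text{in } H^{-1}(\Omega).
\]
Testing this with $u_n$ itself and using the boundedness of $\|u_n\|_{H^1_0}$ together with $\|u_n\|_{L^2}^2=c>0$ shows that $\{\lambda_n\}$ is bounded, so $\lambda_n\to\lambda$ along a further subsequence. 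Here the nonlocal coefficient $a+b\int_\Omega|\nabla u_n|^2\,dx$ is bounded above and bounded below by $a>0$, so it is harmless at this stage. Passing to the weak limit in the equation, and using the strong $L^{p-1}$ convergence to identify $|u_n|^{p-2}u_n\to|u|^{p-2}u$ in $L^{p'}(\Omega)$, one sees that $u$ solves $-(a+b\int_\Omega|\nabla u|^2)\Delta u+\lambda u=\rho|u|^{p-2}u$ weakly, where in the passage I must also argue that $\int_\Omega|\nabla u_n|^2\to\int_\Omega|\nabla u|^2$, which is exactly the strong-convergence statement I am trying to prove; to avoid circularity I would instead keep the coefficient as $A_n:=a+b\int_\Omega|\nabla u_n|^2$, note $A_n\to A:=a+b\|\nabla u\|_{L^2}^2$ only \emph{after} establishing gradient convergence, and carry $A_n$ along as an unknown bounded quantity converging to some $\bar A\ge a$.

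The crux — and the step I expect to be the main obstacle — is upgrading the weak gradient convergence $u_n\rightharpoonup u$ to strong convergence in $H^1_0(\Omega)$, which because of the nonlocal Kirchhoff term is more delicate than in the semilinear case. The strategy is to test the relation $J_\rho'(u_n)+\lambda_n u_n=o(1)$ against $(u_n-u)$, giving
\[
A_n\int_\Omega \nabla u_n\cdot\nabla(u_n-u)\,dx
= -\lambda_n\int_\Omega u_n(u_n-u)\,dx
+\rho\int_\Omega |u_n|^{p-2}u_n(u_n-u)\,dx + o(1).
\]
On the right-hand side, $\int_\Omega u_n(u_n-u)\to 0$ since $u_n\to u$ in $L^2$, and $\int_\Omega|u_n|^{p-2}u_n(u_n-u)\to 0$ by the strong $L^p$ convergence together with the uniform $L^p$-boundedness of $|u_n|^{p-2}u_n$ (via H\"older); hence the left-hand side tends to $0$. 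Since $\int_\Omega\nabla u\cdot\nabla(u_n-u)\to 0$ by weak convergence, it follows that $A_n\int_\Omega|\nabla(u_n-u)|^2\,dx\to 0$, and because $A_n\ge a>0$ uniformly we conclude $\|\nabla(u_n-u)\|_{L^2}\to 0$, i.e.\ $u_n\to u$ strongly in $H^1_0(\Omega)$. The positivity $A_n\ge a$ is precisely what tames the nonlocal term and prevents any degeneracy; with strong convergence in hand one also retroactively obtains $A_n\to A$ and confirms $u$ solves the limiting equation, and $u\in\mathcal{S}_c$ was already shown. This completes the proof.
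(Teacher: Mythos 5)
Your proposal is correct and follows essentially the same route as the paper: extract the weak limit and the multipliers $\lambda_n$ from the constrained Palais--Smale condition, show $\{\lambda_n\}$ is bounded by testing with $u_n$, then test with $u_n-u$ and use the strong $L^p$ and $L^2$ convergence plus the uniform lower bound $a+b\int_\Omega|\nabla u_n|^2\,dx\geq a>0$ to force $\|\nabla(u_n-u)\|_{L^2}\to 0$. Your handling of the nonlocal coefficient (keeping $A_n$ as a bounded sequence rather than prematurely passing to the limit) matches the paper's device of retaining $\int_\Omega|\nabla u_n|^2\,dx$ inside its auxiliary functional $T$, so there is no gap.
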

\begin{proof}
Since the sequence $\{u_n\}\subset \mathcal{S}_c$ is bounded in $H^1_0(\Omega)$, up to subsequence, there exists $u\in \mathcal{S}_c$ such that
\begin{eqnarray*}
&& u_n\rightharpoonup u~~\mbox{in}~~ H_0^{1}(\Omega),\\
&& u_n\rightarrow u~~\mbox{in}~~L^{p}(\Omega),~~p\in[2,2^\ast),\\
&& u_n(x)\rightarrow u(x)~~\mbox{for a.e. }x\in\Omega.
\end{eqnarray*}
Define the linear functional $\Psi_{\rho}(u):H^1_0(\Omega)\rightarrow\mathbb{R}$ by
\[
\Psi_{\rho}(u)v=a\int_{\Omega}\nabla u\nabla vdx+b\int_{\Omega}\vert\nabla u\vert^2dx\int_{\Omega}\nabla u\nabla vdx-\rho\int_{\Omega}\vert u\vert^{p-2}uvdx.
\]
Using Lemma 3 in \cite{Berestycki-1983}, there exists $\lambda_n\in\mathbb{R}$ such that
\begin{equation}\label{conv1}
\Psi_{\rho}(u_n)v+\lambda_n\int_{\Omega}u_nv dx=o_n(1)
\end{equation}
for every $v\in H^1_0(\Omega)$. Here $o_n(1)\rightarrow0$ as $n\rightarrow+\infty$.
Taking $v=u_n$, we get
\[
\lambda_nc=\rho\int_{\Omega}\vert u_n\vert^pdx-a\int_{\Omega}\vert\nabla u_n\vert^2dx-b\bigg(\int_{\Omega}\vert\nabla u_n\vert^2dx\bigg)^2+o_n(1).
\]
The boundedness of $\{u_n\}$ in $H^1_0(\Omega)$ implies the boundedness of $\lambda_n$, and thus there exists $\bar{\lambda}\in \mathbb{R}$, up to subsequence, such that $\lambda_n\rightarrow\bar{\lambda}$. Then, replacing $v$ with $u_n-u$ in (\ref{conv1}), one has
\[
\Psi_{\rho}(u_n)(u_n-u)+\lambda_n\int_{\Omega}u_n(u_n-u)dx=o_n(1).
\]
Now, let us define the operator $T: H^1_0(\Omega)\rightarrow\mathbb{R}$ by
\[
T(v)=a\int_{\Omega}\nabla u\nabla v dx+b\int_{\Omega}\vert\nabla u_n\vert^2dx\int_{\Omega}\nabla u\nabla vdx+\bar{\lambda}\int_{\Omega}uvdx.
\]
Note that $T(u_n-u)=o_n(1)$ since $u_n\rightharpoonup u$ in $H^1_0(\Omega)$,
hence, it follows that
\begin{eqnarray*}
o_n(1)&=&\Psi_{\rho}(u_n)(u_n-u)+\lambda_n\int_{\Omega}u_n(u_n-u)dx-T(u_n-u)\notag\\
&=&a\int_{\Omega}\vert\nabla (u_n-u)\vert^2dx+b\int_{\Omega}\vert\nabla u_n\vert^2dx\int_{\Omega}\vert\nabla (u_n-u)\vert^2dx\notag\\
&-&\rho\int_{\Omega}\vert u_n\vert^{p-2}u_n(u_n-u)dx+\lambda_n\int_{\Omega}u_n(u_n-u)dx-\bar{\lambda}\int_{\Omega}u(u_n-u)dx\notag\\
&=&a\int_{\Omega}\vert\nabla (u_n-u)\vert^2dx+b\int_{\Omega}\vert\nabla u_n\vert^2dx\int_{\Omega}\vert\nabla (u_n-u)\vert^2dx+o_n(1)\notag
\end{eqnarray*}
by the strong convergence in $L^p(\Omega)$ of $\{u_n\}$ and the convergence of $\lambda_n$, which concludes the proof.
\end{proof}

To establish the existence of normalized solutions of the mountain pass-type for problem (\ref{2.1}), we will employ a recently introduced minimax principle for constraint functionals by \cite{Borthwick-2023}. In order to present the abstract theorem, in what follows we outline a general framework as in \cite{Berestycki-1983} (see also \cite{Borthwick-2023,BCJS-2023, Chang-2022}).

Let $(E,\langle\cdot,\cdot\rangle)$ and $(H,\langle\cdot,\cdot\rangle)$ be two infinite-dimensional Hilbert spaces such that
$ E\hookrightarrow H\hookrightarrow E^{'}$ with continuous injections. For simplicity, we suppose that $E\hookrightarrow H$ has norm at most $1$ and identify $E$ with its image in $H$.  We define
$$\left\{\aligned &\vert\vert u\vert\vert^2=\langle u,u\rangle,\\
& \vert u\vert^2=(u,u)\endaligned\right.$$
for each $u\in E$,
and
$$\mathcal{M}_c=\{u\in E\big\vert\vert u\vert^2=c\},~ \forall c>0.$$
Obviously, $\mathcal{M}_c$ is a submanifold of $E$ of codimension $1$ and its tangent space at a given point $u\in \mathcal{M}_c$ is given by
$$T_u\mathcal{M}_c=\{v\in E\big\vert(u,v)=0\}.$$
Denote by $\vert\vert\cdot\vert\vert_\ast$ and $\vert\vert\cdot\vert\vert_{\ast\ast}$, respectively, the operator norm of $\mathcal{L}(E,\mathbb{R} ) $ and $\mathcal{L}(E,\mathcal{L}(E,\mathbb{R} )) $.

\begin{definition}\cite{Borthwick-2023}\label{def:hc}
Let $\phi:E\rightarrow\mathbb{R}$ be a $C^2$-functional on $E$ and $\alpha\in (0,1]$. We say that $\phi'$ and $\phi''$ are $\alpha$-H\"{o}lder continuous on bounded sets if for any $R<0$ one can find $M=M(R)>0$ such that, for any $u_1,u_2\in B_R(0)$,
\begin{equation}\label{hc}
\vert\vert\phi{'}-\phi{'}\vert\vert_{\ast}\leq M\vert\vert u_1-u_2\vert\vert^{\alpha},\vert\vert\phi{''}-\phi{''}\vert\vert_{\ast\ast}\leq M\vert\vert u_1-u_2\vert\vert^{\alpha}.
\end{equation}
\end{definition}

\begin{definition}\cite{Borthwick-2023}\label{def:cbm }
Let $\phi$ be a $C^2$-functional on $E$. For any $u\in E$, we define the continuous bilinear map
\begin{equation*}
D^2\phi(u)=\phi{''}(u)-\frac{\phi{'}(u)\cdot u}{\vert u\vert^2}(\cdot,\cdot).
\end{equation*}
\end{definition}

\begin{definition}\cite{Borthwick-2023}\label{def:a morse }
Let $\phi$ be a $C^2$-functional on $E$. For any $u\in \mathcal{M}_c$ and $\theta>0$, we define the approximate Morse index by
\begin{eqnarray*}\label{a morse}
\tilde{m}_{\theta}(u)&=&\sup\{\text{dim}L\vert L\text{ is a subspace of }T_u\mathcal{M}_c\text{ such that }D^2\phi(u)[\varphi,\varphi]<-\theta\vert\vert\varphi\vert\vert^2,\\
&&\forall\varphi\in L\backslash \{0\}\}.
\end{eqnarray*}
If $u$ is a critical point for the constrained functional $\phi\vert_{\mathcal{M}_c}$ and $\theta=0$, we say that is the Morse index of $u$ as constrained critical point.
\end{definition}

\begin{lemma}\cite{Borthwick-2023}\label{lem:mima lem}
Let $I\subset(0,+\infty)$ be an interval and consider a family of $C^2$ functional $\Phi_{\rho}:E\rightarrow\mathbb{R}$ of the form
\begin{equation*}
\Phi_{\rho}(u)=A(u)-\rho B(u), \ \ \ \rho\in I,
\end{equation*}
where $B(u)\geq0$ for every $u\in E$, and
\begin{equation}\label{ab}
\text{either }A(u)\rightarrow+\infty \ \text{or} \ B(u)\rightarrow+\infty \ \text{as} \ u\in E\text{ and } \vert\vert u\vert\vert\rightarrow+\infty.
\end{equation}
Suppose moreover that $\Phi'_{\rho}$ and $\Phi''_{\rho}$ are $\alpha$-H\"{o}lder continuous on bounded sets for some $\alpha\in(0,1]$. Finally, suppose that there exist $w_1,w_2\in\mathcal{M}_c$ (independent of $\rho$) such that, setting
\[
\Gamma=\{\gamma\in C([0,1],\mathcal{M}_c)\vert\gamma(0)=w_1,\gamma(1)=w_2\},
\]
we have
\[
c_{\rho}=\inf_{\gamma\in \Gamma}\max_{t\in[0,1]}\Phi_{\rho}(\gamma(t))>\max\{\Phi_{\rho}(w_1),\Phi_{\rho}(w_2)\}, \ \ \ \rho\in I.
\]
Then, for almost every $\rho\in I$, there exist sequences $\{u_n\}\subset\mathcal{M}_c$ and $\zeta_n\rightarrow0^+$ such that, as $n\rightarrow+\infty$,
\begin{itemize}
\item[(i)] $\Phi_{\rho}(u_n)\rightarrow c_{\rho}$;
\item[(ii)] $\vert\vert\Phi'_{\rho}\vert_{\mathcal{M}_c}(u_n)\vert\vert_*\rightarrow 0$;
\item[(iii)] $\{u_n\}$ is bounded in $E$;
\item[(iv)] $\tilde{m}_{\zeta_n}(u_n)\leq1$.
\end{itemize}
\end{lemma}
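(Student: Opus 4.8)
The plan is to combine the monotonicity trick of Struwe and Jeanjean with a Morse-index-localizing deformation in the spirit of Ghoussoub's abstract minimax theory; this is precisely the scheme of \cite{Borthwick-2023}. The starting observation is that, since $B(u)\geq 0$, for $\rho'\leq\rho$ one has $\Phi_{\rho'}(u)-\Phi_\rho(u)=(\rho-\rho')B(u)\geq 0$, so $\Phi_{\rho'}\geq\Phi_\rho$ pointwise and hence $\rho\mapsto c_\rho$ is non-increasing on $I$. Being monotone, $c_\rho$ is differentiable on a full-measure subset $I'\subset I$, and the proof will produce the desired sequences for every $\rho\in I'$.

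Fix $\rho\in I'$ and set $c_\rho':=\frac{d}{ds}c_s\big|_{s=\rho}\in(-\infty,0]$. Choosing $\rho_n\uparrow\rho$ and nearly optimal paths $\gamma_n\in\Gamma$ with $\max_t\Phi_{\rho_n}(\gamma_n(t))\leq c_{\rho_n}+(\rho-\rho_n)$, I would restrict attention to the ``almost maximal'' portion $\{t:\Phi_\rho(\gamma_n(t))\geq c_\rho-(\rho-\rho_n)\}$, which is nonempty because $\max_t\Phi_\rho(\gamma_n(t))\geq c_\rho$. On this set the identity $B(u)=\frac{\Phi_{\rho_n}(u)-\Phi_\rho(u)}{\rho-\rho_n}$ together with the difference-quotient bound coming from differentiability of $c_\rho$ yields $B(\gamma_n(t))\leq -c_\rho'+2$ for $n$ large; consequently $A(\gamma_n(t))=\Phi_\rho(\gamma_n(t))+\rho B(\gamma_n(t))$ is bounded above as well. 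Since both $A$ and $B$ stay bounded on these points, hypothesis (\ref{ab}) forces the corresponding images to be bounded in $E$. A standard deformation argument on this bounded ``almost maximal'' set then extracts $u_n=\gamma_n(t_n)$ satisfying (i)--(iii): if no such Palais--Smale sequence existed, a pseudo-gradient flow would push the maximum of $\gamma_n$ strictly below $c_\rho$, contradicting the minimax characterization.

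To upgrade this to the Morse-index control (iv), I would argue by contradiction and suppose that no bounded sequence with $\tilde m_{\zeta_n}(u_n)\leq 1$ exists. Then there are $\theta>0$ and a neighborhood of the minimax level on which every admissible point $u$ has $\tilde m_\theta(u)\geq 2$, i.e.\ $D^2\Phi_\rho(u)$ is $\theta$-negative on a two-dimensional subspace of $T_u\mathcal{M}_c$. The geometric heart of the matter is that the minimax family $\Gamma$ consists of one-dimensional paths, so a one-dimensional image can be deformed off a set whose unstable manifolds have dimension at least two: using a pseudo-gradient field augmented by the negative second-order directions, one constructs a deformation producing $\tilde\gamma\in\Gamma$ with $\max_t\Phi_\rho(\tilde\gamma(t))<c_\rho$, again contradicting the definition of $c_\rho$. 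Here the $\alpha$-H\"older continuity of $\Phi_\rho'$ and $\Phi_\rho''$ is exactly what makes the second-order-informed vector field locally Lipschitz and the two-dimensional negative subspaces vary stably, so that the flow and the index bookkeeping are legitimate.

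I expect the third step---the simultaneous achievement of boundedness and the Morse-index bound---to be the main obstacle. The monotonicity trick furnishes boundedness but carries no second-order information, while Ghoussoub-type localization supplies the index estimate but not boundedness; the delicate point is to run a single constraint-preserving deformation on the bounded ``almost maximal'' set that both lowers the level (yielding the Palais--Smale property) and respects the two-dimensional $\theta$-negative subspaces (yielding $\tilde m_{\zeta_n}\leq 1$). Keeping the deformation tangent to $\mathcal{M}_c$ and controlling the approximate Morse index defined through $D^2\Phi_\rho$ under the flow is where the H\"older regularity hypothesis is indispensable.
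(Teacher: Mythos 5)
First, a contextual point: the paper does not prove this lemma at all --- it is imported verbatim from \cite{Borthwick-2023} and used as a black box --- so your proposal can only be measured against the proof in that reference, not against anything internal to this paper. Your first paragraph reconstructs the first ingredient of that proof, the Struwe--Jeanjean monotonicity trick, essentially correctly and completely: monotonicity of $\rho\mapsto c_\rho$ from $B\geq 0$, a.e.\ differentiability of a monotone function, the difference-quotient bound $B(\gamma_n(t))\leq -c_\rho'+2$ on the almost-maximal portion of nearly optimal paths, the resulting upper bound on $A$, and boundedness in $E$ via (\ref{ab}). This is the standard route and matches the strategy of the cited source.

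The genuine gap is in the second half, and it sits exactly where you yourself flag the ``main obstacle.'' What distinguishes the theorem of \cite{Borthwick-2023} from Jeanjean's classical monotonicity-trick result is precisely the construction of a constraint-preserving deformation that (a) operates only on the bounded almost-maximal set furnished by the first step, and (b) strictly lowers the maximum of a one-dimensional path under the assumption that every nearby almost-critical point carries approximate Morse index $\geq 2$, all of this with only $\alpha$-H\"older continuous $\Phi'_\rho$ and $\Phi''_\rho$ --- so there is no $C^{1,1}$ vector field, no smooth flow along negative eigendirections, and no off-the-shelf stable-manifold argument; the second-order descent directions must be followed by a quantitative, essentially discretized construction. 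Your proposal asserts that this deformation exists (``a pseudo-gradient field augmented by the negative second-order directions\dots'') rather than constructing it, and it also glosses over the quantifier step in the contradiction: negating the simultaneous validity of (i)--(iv) along sequences requires producing a single $\theta>0$, a level window, a gradient threshold, and a norm bound $R$ such that the index is at least $2$ on that entire set, where $R$ must coincide with the bound coming from the monotonicity trick --- otherwise the deformation may leave the region where boundedness is known. Since these points are where all the analytical difficulty of the cited theorem lies, what you have written is a correct roadmap of the right proof, but not a proof.
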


\begin{lemma}\label{lem:morse}
Let $0<c\leq c^*$. For almost every $\rho\in[\frac{1}{2},1]$, there exists $(u_{\rho},\lambda_{\rho})\in \mathcal{S}_c\times\mathbb{R}$ which solves (\ref{2.1}).
Moreover, $m(u_{\rho})\leq2$.
\end{lemma}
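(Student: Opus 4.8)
The plan is to apply the abstract parametrized minimax principle (Lemma \ref{lem:mima lem}) to the family $J_\rho$, upgrade the resulting Palais--Smale sequence to a genuine critical point via the compactness Lemma \ref{lem:conv}, and finally convert the approximate-Morse-index bound into the stated bound on the Morse index of the solution. I would set up the abstract framework with $E=H^1_0(\Omega)$, $H=L^2(\Omega)$, $\mathcal{M}_c=\mathcal{S}_c$, and write $J_\rho=A-\rho B$ with
\[
A(u)=\frac a2\int_\Omega|\nabla u|^2\,dx+\frac b4\Big(\int_\Omega|\nabla u|^2\,dx\Big)^2,\qquad B(u)=\frac1p\int_\Omega|u|^p\,dx.
\]
Then the hypotheses of Lemma \ref{lem:mima lem} on $I=[\tfrac12,1]$ must be checked: $B(u)\ge 0$ is clear; $A(u)\to+\infty$ as $\|u\|\to+\infty$ follows from Poincar\'e's inequality, so (\ref{ab}) holds; the $\alpha$-H\"older continuity of $J_\rho'$ and $J_\rho''$ on bounded sets is a routine consequence of the Sobolev embeddings $H^1_0(\Omega)\hookrightarrow L^s(\Omega)$ (for $s<2^*$) together with the pointwise estimates for $|t|^{p-2}t$ and $|t|^{p-2}$, where one uses $p>2+\tfrac8N\ge\tfrac{14}{3}>3$ for $N\le 3$, so that even $\alpha=1$ is admissible; and the uniform mountain pass geometry is exactly Lemma \ref{lem:mimax g}, which supplies $w_1,w_2\in\mathcal{S}_c$ independent of $\rho$ with $c_\rho>\max\{J_\rho(w_1),J_\rho(w_2)\}$.

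With the hypotheses verified, Lemma \ref{lem:mima lem} yields, for almost every $\rho\in[\tfrac12,1]$, a sequence $\{u_n\}\subset\mathcal{S}_c$ and $\zeta_n\to0^+$ such that $J_\rho(u_n)\to c_\rho$, $\|J_\rho'|_{\mathcal{S}_c}(u_n)\|_*\to0$, $\{u_n\}$ is bounded in $H^1_0(\Omega)$, and $\tilde m_{\zeta_n}(u_n)\le1$. Being a bounded Palais--Smale sequence, Lemma \ref{lem:conv} gives, up to a subsequence, $u_n\to u_\rho$ strongly in $H^1_0(\Omega)$ with $u_\rho\in\mathcal{S}_c$; the Lagrange multiplier rule as in (\ref{conv1}) produces $\lambda_\rho\in\mathbb{R}$ with $\lambda_n\to\lambda_\rho$, so that $(u_\rho,\lambda_\rho)$ solves (\ref{2.1}) and $J_\rho(u_\rho)=c_\rho$.

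It remains to establish $m(u_\rho)\le2$, which I regard as the \emph{delicate} step. At the critical point one computes $J_\rho'(u_\rho)\cdot u_\rho/|u_\rho|^2=-\lambda_\rho$, so $D^2J_\rho(u_\rho)$ coincides with the Hessian of the Lagrangian $J_\rho+\tfrac{\lambda_\rho}{2}|\cdot|^2$, i.e. with the bilinear form associated to the linearization of (\ref{2.1}) at $u_\rho$. I would first pass the approximate constrained Morse index to the limit: if the restriction of $D^2J_\rho(u_\rho)$ to $T_{u_\rho}\mathcal{S}_c$ were negative definite on some two-dimensional subspace $L$, then by finite dimensionality $D^2J_\rho(u_\rho)[\varphi,\varphi]\le-\delta\|\varphi\|^2$ on $L$ for some $\delta>0$; using the strong convergence $u_n\to u_\rho$ and $\lambda_n\to\lambda_\rho$, which makes the forms $D^2J_\rho(u_n)$ converge on bounded sets, together with the convergence of the tangent spaces $T_{u_n}\mathcal{S}_c\to T_{u_\rho}\mathcal{S}_c$ in $L^2$, one transplants $L$ to a nearby two-dimensional $L_n\subset T_{u_n}\mathcal{S}_c$ on which $D^2J_\rho(u_n)[\varphi,\varphi]<-\zeta_n\|\varphi\|^2$ for large $n$, contradicting $\tilde m_{\zeta_n}(u_n)\le1$. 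Hence the constrained Morse index of $u_\rho$ is at most $1$.

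Since $\mathcal{S}_c$ has codimension one in $H^1_0(\Omega)$, restricting the quadratic form $D^2J_\rho(u_\rho)$ to $T_{u_\rho}\mathcal{S}_c$ lowers its Morse index by at most one; consequently the (free) Morse index $m(u_\rho)$ of $u_\rho$ as a solution of (\ref{2.1}) satisfies $m(u_\rho)\le1+1=2$. The \emph{main obstacle} is controlling the nonlocal contribution $b\|\nabla u\|^2\Delta u$ appearing in $D^2J_\rho$ when passing to the limit and when perturbing subspaces between the tangent spaces $T_{u_n}\mathcal{S}_c$ and $T_{u_\rho}\mathcal{S}_c$, but the strong $H^1_0$ convergence of $\{u_n\}$ provided by Lemma \ref{lem:conv} is precisely what makes the associated bilinear forms converge uniformly on bounded sets, closing the argument.
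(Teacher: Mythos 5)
Your proposal is correct and follows essentially the same route as the paper: the same application of Lemma~\ref{lem:mima lem} with $E=H^1_0(\Omega)$, $H=L^2(\Omega)$, the same splitting $J_\rho=A-\rho B$ and uniform geometry from Lemma~\ref{lem:mimax g}, the same passage to a strong limit via Lemma~\ref{lem:conv}, and your two-step Morse index argument (passing the approximate constrained index $\tilde m_{\zeta_n}(u_n)\le 1$ to the limit, then adding one for the codimension of $\mathcal{S}_c$) is exactly the argument the paper outsources to \cite{BCJS-2023}. The only ingredient of the paper's proof you omit is the replacement $u_n\mapsto |u_n|$ together with the strong maximum principle, which gives positivity of $u_\rho$ --- not required by the statement itself, but used later in the blow-up analysis.
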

\begin{proof}
We apply Lemma \ref{lem:mima lem} to the family of functionals $J_{\rho}$, with $E=H^1_0(\Omega), H=L^2(\Omega), \mathcal{M}_c=\mathcal{S}_c$, and $\Gamma$ defined in Lemma \ref{lem:mimax g}. Setting
\[
A(u)=\frac{a}{2}\int_{\Omega}\vert\nabla u\vert^2dx+\frac{b}{4}\bigg(\int_{\Omega}\vert\nabla u\vert^2dx\bigg)^2 \ \ \ \text{and} \ \ \ B(u)=\frac{1}{p}\vert u\vert^pdx,
\]
it is easily seen that assumption (\ref{ab}) holds. Denote by $J'_{\rho}$ and $J''_{\rho}$ respectively the unconstrained first and second derivatives of $J_{\rho}$. Since $J_{\rho}$ is of class $C^2$ on $\mathcal{S}_c$, it follows that $J'_{\rho}$ and $J''_{\rho}$ are local H\"older continuous on $\mathcal{S}_c$. Thus all the assumptions of Lemma \ref{lem:mima lem} are satisfied.

At this point, for almost $\rho\in[\frac{1}{2},1]$, there exists a bounded Palais-Smale sequence $\{u_{n,\rho}\}\subset H^1_0(\Omega)$ for $J_{\rho}$ constrained on $\mathcal{S}_c$ at the level $c_{\rho}$ and a sequence $\{\zeta_n\}\subset\mathbb{R}^+$ with $\zeta_n\rightarrow0^+$ satisfying $\tilde{m}_{\zeta_n}(u_{n,\rho})\leq1$.
For simplicity, we still denote $u_{n,\rho}$ by $u_n$. Since the map $u\mapsto\vert u\vert$ is continuous, and $J_{\rho}(u)=J_{\rho}(\vert u\vert)$, we may assume that
$u_n\geq0$. From Lemma \ref{lem:conv}, we derive that there exists $0\leq u_{\rho}\in \mathcal{S}_c$ such that $u_n\rightarrow u_{\rho}$ in $H^1_0(\Omega)$, which implies that, for some $\lambda_{\rho}\in \mathbb{R}$, $(u_{\rho}, \lambda_{\rho})$ satisfies
\begin{equation*}
\begin{cases}
-\big(a+b\int_{\Omega}\vert\nabla u_\rho\vert^2dx\big)\Delta u_\rho+\lambda_{\rho}u_\rho=\rho\vert u_\rho\vert^{p-2}u_\rho& \text{ in }\Omega,\\
u_\rho=0 & \text{ on }\partial\Omega, \\
\int_{\Omega}\vert u_\rho\vert^2dx=c.
\end{cases}
\end{equation*}
By the strong maximum principle it follows that $u_{\rho}>0$ in $\Omega$. Furthermore, by similar arguments as in \cite{BCJS-2023} we infer that
$m(u_{\rho})\leq2$.
\end{proof}

\section{Blow-up analysis}\label{sec:4}

Utilizing Lemma \ref{lem:morse}, it is established that there exists a sequence $\rho_n \rightarrow 1^-$ and a corresponding sequence of mountain pass critical points $\{u_{\rho_n}\}$ of $J_{\rho_n}$ on $\mathcal{S}_c$ at the level $c_{\rho_n}$ with a Morse index $m(u_{\rho_n}) \leq 2$. Our subsequent objective is to demonstrate the convergence of $\{u_{\rho_n}\}$ to a critical point $u$ of $J\vert_{\mathcal{S}_c}$. To achieve this, a crucial step involves establishing the boundedness of $\{u_{\rho_n}\}$. In this section, we adapt the arguments presented in \cite{Esposito-2011} (see also \cite{PV-2017}) to conduct a blow-up analysis for nonlinear Kirchhoff type equations. This analysis is essential in our efforts to ensure the boundedness of $\{u_{\rho_n}\}$.

For notational convenience, we put $u_n:=u_{\rho_n}$, $c_n:=c_{\rho_n}$, $J_n:=J_{\rho_n}$ and
\[
e_n:=\int_{\Omega}\vert\nabla u_n\vert^2dx.
\]
In what follows, we consider the behaviour of a sequence of solutions $\{u_n\}\subset \mathcal{S}_c$ of the following problem
\begin{equation}\label{eq5.1}
\begin{cases}
-(a+be_n)\Delta u_n+\lambda_n u_n=\rho_nu_n^{p-1}& \text{ in }\Omega,\\
u_n>0  & \text{ in }\Omega, \\
\ u_n=0  & \text{ on }\partial\Omega,
\end{cases}
\end{equation}
where $\rho_n\rightarrow1^-$, $m(u_n)\leq2$ for all $n\in\mathbb{N}$.

By (\ref{eq5.1}) and $c_n=J_n(u_n)$, we deduce that
\begin{equation}\label{energy}
\big(\frac{1}{2}-\frac{1}{p}\big)ae_n+\big(\frac{1}{4}-\frac{1}{p}\big)be_n^2=\frac{c\lambda_n}{p}+c_{n}.
\end{equation}
Using the monotonicity of $c_{\rho}$ with respect to $\rho$, we obtain that $\{c_n\}$ is bounded. In view of $p>2+\frac{8}{N}>4$, it follows that the left side of (\ref{energy}) is nonnegative, which implies $\lambda_n\ge -C$ for some $C>0$. And then we can derive that  $\{e_n\}$ is bounded if $\{\lambda_n\}$ is bounded from above. Furthermore, by (\ref{energy}) we have

\begin{lemma}\label{lem:rel2}
Let $\lambda_n\rightarrow+\infty$ as $n\rightarrow+\infty$. Then, there exists constants $C_1,C_2\geq0$ such that
\[
C_1\le \frac{e_n^{2}}{\lambda_n}\le C_2
\]
holds for $n$ large enough.
\end{lemma}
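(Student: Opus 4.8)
The plan is to read off the claimed two-sided bound directly from the energy identity (\ref{energy}), treating it as an algebraic relation between $e_n$ and $\lambda_n$. I set
$$A := \Big(\frac{1}{2}-\frac{1}{p}\Big)a, \qquad B := \Big(\frac{1}{4}-\frac{1}{p}\Big)b,$$
and recall that $p>2+\frac{8}{N}>4$ for every $1\le N\le 3$, so that $A,B>0$. With this notation (\ref{energy}) reads
$$A e_n + B e_n^2 = \frac{c}{p}\lambda_n + c_n,$$
where $\{c_n\}$ is bounded by the monotonicity of $c_\rho$ with respect to $\rho$ noted just above.

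First I would verify that $e_n\to+\infty$. Since $\lambda_n\to+\infty$ and $\{c_n\}$ is bounded, the right-hand side tends to $+\infty$; as $A,B>0$ and $e_n\ge 0$, the left-hand side can diverge only if $e_n\to+\infty$. Indeed, along any subsequence on which $e_n$ stayed bounded the left-hand side would remain bounded, contradicting the divergence of the right-hand side.

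Next, with $e_n\to+\infty$ in hand, the quadratic term dominates the linear one, since
$$A e_n + B e_n^2 = B e_n^2\Big(1 + \frac{A}{B e_n}\Big) = B e_n^2\big(1+o(1)\big).$$
Dividing (\ref{energy}) by $\lambda_n$ and using that $c_n/\lambda_n\to 0$ then gives
$$B\,\frac{e_n^2}{\lambda_n}\big(1+o(1)\big) = \frac{c}{p} + \frac{c_n}{\lambda_n} = \frac{c}{p} + o(1),$$
so that
$$\frac{e_n^2}{\lambda_n} \longrightarrow \frac{c}{pB} = \frac{c}{p\big(\frac{1}{4}-\frac{1}{p}\big)b} > 0.$$

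In particular $\{e_n^2/\lambda_n\}$ converges to a strictly positive limit, hence is bounded above and below by positive constants for all $n$ large enough, which yields the asserted $C_1,C_2$ (in fact one may take $C_1>0$). I do not expect any essential obstacle here: the only point requiring a little care is the first step, ensuring $e_n\to+\infty$ so that $Be_n^2$ is the leading-order term; the remainder is elementary asymptotic analysis of the identity (\ref{energy}), and the positivity of both coefficients, guaranteed by $p>4$, is what makes the argument work uniformly across $1\le N\le 3$.
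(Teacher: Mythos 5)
Your proposal is correct and follows exactly the route the paper intends: the paper states this lemma as an immediate consequence of the energy identity (\ref{energy}) together with the boundedness of $\{c_n\}$, and your computation simply fills in that elementary step. In fact you obtain slightly more than is claimed, namely that $e_n^2/\lambda_n$ converges to the explicit positive limit $\frac{4c}{(p-4)b}$, which in particular yields the two-sided bound with $C_1>0$.
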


We also have the following observation.
\begin{lemma}\label{lem:rel1}
Let $P_n$ be a local maximum point for $u_n$. Then
\[
u_n(P_n)\geq\lambda_n^{\frac{1}{p-2}}.
\]
\end{lemma}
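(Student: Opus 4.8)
The idea is that the conclusion is a pointwise consequence of the second-order maximum principle applied to equation (\ref{eq5.1}) at $P_n$. First I would observe that $P_n$ must lie in the interior of $\Omega$: since $u_n>0$ throughout $\Omega$ while $u_n=0$ on $\partial\Omega$, a point at which the positive function $u_n$ attains a local maximum cannot belong to the boundary, so $P_n\in\Omega$. By standard elliptic regularity applied to (\ref{eq5.1}) (the coefficient $a+be_n>0$ is a fixed positive constant and the right-hand side $\rho_n u_n^{p-1}$ is smooth in $u_n$), we have $u_n\in C^2(\Omega)$, so $\Delta u_n(P_n)$ is well defined as a pointwise quantity.

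Next, since $P_n$ is an interior local maximum of the $C^2$ function $u_n$, the Hessian $D^2u_n(P_n)$ is negative semidefinite, and therefore $\Delta u_n(P_n)=\mathrm{tr}\,D^2u_n(P_n)\le 0$. Evaluating (\ref{eq5.1}) at $P_n$ gives
\[
\rho_n u_n(P_n)^{p-1}=-(a+be_n)\Delta u_n(P_n)+\lambda_n u_n(P_n).
\]
Because $a+be_n>0$ and $-\Delta u_n(P_n)\ge 0$, the first term on the right-hand side is nonnegative, whence
\[
\rho_n u_n(P_n)^{p-1}\ge \lambda_n u_n(P_n).
\]

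Finally, dividing by $u_n(P_n)>0$ yields $\rho_n u_n(P_n)^{p-2}\ge \lambda_n$, and using $\rho_n\le 1$ we obtain $u_n(P_n)^{p-2}\ge \lambda_n/\rho_n\ge\lambda_n$. Since $p-2>0$, taking $(p-2)$-th roots of the two positive sides gives the claim $u_n(P_n)\ge\lambda_n^{1/(p-2)}$ (the statement being of interest precisely in the regime $\lambda_n>0$, which is the one arising in the blow-up analysis; when $\lambda_n\le 0$ the inequality $u_n(P_n)^{p-2}\ge\lambda_n$ still holds trivially). There is essentially no serious obstacle here: the only points requiring a word of care are the interiority of $P_n$ and the $C^2$-regularity needed to evaluate the Laplacian pointwise, both of which follow at once from the positivity of $u_n$ inside $\Omega$, the vanishing of $u_n$ on $\partial\Omega$, and elliptic regularity.
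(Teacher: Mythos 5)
Your proof is correct and follows exactly the paper's argument: evaluate equation (\ref{eq5.1}) at the interior local maximum $P_n$, use $\Delta u_n(P_n)\le 0$ together with $a+be_n>0$ and $\rho_n\le 1$ to obtain $\lambda_n\le u_n(P_n)^{p-2}$. The extra remarks on interiority, $C^2$ regularity, and the sign of $\lambda_n$ are fine but inessential details that the paper leaves implicit.
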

\begin{proof}
The fact that $u_n$ is a $C^2$ function in $\Omega$ and $P_n$ is a local maximum point for $u_n$ imply $\Delta u_n(P_n)\leq0$. By (\ref{eq5.1}) we get
\[
\lambda_n u_n(P_n)-\rho_nu_n^{p-1}(P_n)=(a+be_n)\Delta u_n(P_n)\leq0,
\]
which gives the conclusion.
\end{proof}
\begin{remark}\label{rem:rel}
Let $P_n$ be a local maximum point for $u_n$ and $\lambda_n\rightarrow+\infty$ as $n\rightarrow+\infty$. Then by Lemma \ref{lem:rel2} and Lemma \ref{lem:rel1} we have
\begin{eqnarray}\label{1-29-1}
\frac{\sqrt{e_n}}{(u_n(P_n))^{\frac{p-2}{2}}}\leq\frac{\sqrt{e_n}}{\sqrt{\lambda_n}}\rightarrow0 \ \ \ \text{as }n\rightarrow+\infty.
\end{eqnarray}
\end{remark}

The next lemma describes the asymptotic behaviour of the sequence $\{u_n\}$ close to a local maximum point as $\lambda_n\rightarrow+\infty$.

\begin{lemma}\label{lem:blow1}
Let $\lambda_n\rightarrow+\infty$ as $n\rightarrow+\infty$ and $P_n\in\Omega$ be such that, for some $R_n\rightarrow+\infty$,
\[
u_n(P_n)=\max_{\Omega\cap B_{R_n\tilde{\varepsilon}_n\sqrt{e_n}}(P_n)}u_n, \ \ \ \text{ where }\tilde{\varepsilon}_n=(u_n(P_n))^{-\frac{p-2}{2}}\rightarrow0.
\]
Set
\[
U_n(y)=\varepsilon_n^{\frac{2}{p-2}}u_n(\varepsilon_n\sqrt{e_n}y+P_n)\text{ for }y\in\Omega_n=\frac{\Omega-P_n}{\varepsilon_n\sqrt{e_n}}\text{ with } \varepsilon_n=\lambda_n^{-\frac{1}{2}}.
\]
Then, up to a subsequence, we have
\begin{itemize}
\item[(i)] $\frac{\varepsilon_n\sqrt{e_n}}{dist(P_n,\partial\Omega)}\rightarrow0$ as $n\rightarrow+\infty$.
\item[(ii)] $u_n(P_n)=\max\limits_{\Omega\cap B_{R_n\varepsilon_n\sqrt{e_n}}(P_n)}u_n$ for some $R_n\rightarrow+\infty$.
\item[(iii)] $U_n\rightarrow U$ in $C^1_{\rm{loc}}(\mathbb{R}^N)$ as $n\rightarrow+\infty$, where $U$ solves
\begin{equation}\label{eq5.2}
\begin{cases}
-b\Delta U+U=U^{p-1}  & \text{ \rm{in} }\mathbb{R}^N,\\
0<U\leq U(0)  & \text{ \rm{in} }\mathbb{R}^N, \\
U\rightarrow0  & \text{ \rm{as} }\vert x\vert\rightarrow+\infty.
\end{cases}
\end{equation}
\item[(iv)] There exists $\phi_n\in C^\infty_0(\Omega)$ with supp$\phi_n\subset B_{R\varepsilon_n\sqrt{e_n}}(P_n)$, $R>0$, such that for $n$ large
\begin{equation}\label{blow1}
(a+be_n)\int_\Omega\vert\nabla\phi_n\vert^2dx+\int_\Omega[(\lambda-(p-1)\rho_nu_n^{p-2})\phi_n^2]dx<0.
\end{equation}
\item[(v)] For all $R>0$ and $q\geq1$ there holds
\begin{equation}\label{blow2}
\lim_{n\rightarrow+\infty}\lambda_n^{\frac{N}{2}-\frac{q}{p-2}}e_n^{-\frac{N}{2}}\int_{B_{R\varepsilon_n\sqrt{e_n}}(P_n)}u_n^qdx=\int_{B_{R}(0)}U^qdx.
\end{equation}
\end{itemize}
\end{lemma}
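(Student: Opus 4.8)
The plan is to rescale (\ref{eq5.1}) around $P_n$, pass to the limit, and recognise $U$ as the positive ground state of the local problem (\ref{eq5.2}); the role of the factor $\sqrt{e_n}$ in the dilation is precisely to absorb the nonlocal coefficient. Writing $x=\varepsilon_n\sqrt{e_n}\,y+P_n$ and using $\varepsilon_n^2=\lambda_n^{-1}$, a direct computation turns (\ref{eq5.1}) into
\[
-\Big(\frac{a}{e_n}+b\Big)\Delta U_n+U_n=\rho_n U_n^{p-1}\qquad\text{on }\Omega_n .
\]
By Lemma \ref{lem:rel2} one has $e_n\sim\sqrt{\lambda_n}\to+\infty$, so $a/e_n\to0$, while $\rho_n\to1^-$; hence the coefficients converge to those of (\ref{eq5.2}). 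Lemma \ref{lem:rel1} gives $U_n(0)=\lambda_n^{-1/(p-2)}u_n(P_n)\ge1$, and the hypothesis on $P_n$ yields $U_n\le U_n(0)$ on $B_{R_n/s_n}(0)$, where $s_n:=\varepsilon_n/\tilde{\varepsilon}_n=(\lambda_n^{-1}u_n(P_n)^{p-2})^{1/2}\ge1$ and $U_n(0)=s_n^{2/(p-2)}$.

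The first substantial step is the a priori bound $U_n(0)\le C$, equivalently $u_n(P_n)\le C\lambda_n^{1/(p-2)}$, which is what makes the rescaled sequence locally bounded. I would argue by contradiction: if $s_n\to+\infty$ along a subsequence, renormalise at the natural scale by setting $v_n(z)=u_n(P_n)^{-1}u_n(\tilde{\varepsilon}_n\sqrt{e_n}\,z+P_n)$, so that $v_n(0)=1=\max_{B_{R_n}}v_n$ and $v_n$ solves $-(a/e_n+b)\Delta v_n+\mu_n v_n=\rho_n v_n^{p-1}$ with $\mu_n=s_n^{-2}\to0$. Passing to the limit via interior estimates yields a bounded nonnegative $v$ with $v(0)=1$ solving the subcritical Lane--Emden equation $-b\Delta v=v^{p-1}$ on $\mathbb{R}^N$, which is impossible: for $2\le N\le3$ by the Gidas--Spruck Liouville theorem, and for $N=1$ by an elementary first-integral ODE argument. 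Hence $s_n$ stays bounded. For (i) I argue again by contradiction: writing $d_n=\mathrm{dist}(P_n,\partial\Omega)$, if $d_n/(\varepsilon_n\sqrt{e_n})$ remained bounded then $\Omega_n$ would converge to a half-space, on which the blow-up limit is positive, vanishes on the boundary and attains an interior maximum; the moving plane method forces monotonicity in the inner normal direction and thereby excludes such a maximum. Thus $d_n/(\varepsilon_n\sqrt{e_n})\to+\infty$, which is (i), and since then $R_n/s_n\to+\infty$ we also obtain (ii).

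With $U_n$ locally bounded and $\Omega_n\to\mathbb{R}^N$, elliptic regularity gives uniform $C^{1,\gamma}_{\mathrm{loc}}$ bounds, so up to a subsequence $U_n\to U$ in $C^1_{\mathrm{loc}}(\mathbb{R}^N)$, with $U\ge0$ solving $-b\Delta U+U=U^{p-1}$; since $U(0)\ge1$, the strong maximum principle yields $U>0$, and $U\le U(0)$ passes to the limit on expanding balls. The remaining and most delicate assertion in (iii) is the decay $U\to0$, i.e. the exclusion of the constant $U\equiv1$ and of any other bounded, non-decaying profile. \textbf{This is the main obstacle.} The device I would use is the Morse index bound $m(u_n)\le2$: a bounded solution that fails to decay (the constant has linearisation $-b\Delta-(p-2)$, whose quadratic form is negative on an infinite-dimensional space) produces arbitrarily many mutually disjoint localised negative directions for the second variation; transplanting three of them back to $\Omega$ and correcting to the tangent space of $\mathcal{S}_c$ would give constrained Morse index at least $3$, contradicting $m(u_n)\le2$. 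Hence $U$ has finite Morse index, and a classification of bounded, positive, finite-Morse-index solutions (for $N=1$ replaced by the moving plane method, since the analogous Liouville theorem is not available) identifies $U$ with the decaying ground state of (\ref{eq5.2}).

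Finally, (iv) and (v) follow from the convergence already established. For (iv), the ground state has unconstrained Morse index one, so the limiting operator $-b\Delta+1-(p-1)U^{p-2}$ admits $\psi\in C_0^\infty(\mathbb{R}^N)$ with $\int_{\mathbb{R}^N}\big(b|\nabla\psi|^2+\psi^2-(p-1)U^{p-2}\psi^2\big)\,dy<0$; putting $\phi_n(x)=\psi\big((x-P_n)/(\varepsilon_n\sqrt{e_n})\big)$, a change of variables shows that the left-hand side of (\ref{blow1}) equals $\varepsilon_n^{\,N-2}e_n^{N/2}$ times a bracket converging to that negative integral, whence (\ref{blow1}) for $n$ large. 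For (v), the substitution $x=\varepsilon_n\sqrt{e_n}\,y+P_n$ gives $\int_{B_{R\varepsilon_n\sqrt{e_n}}(P_n)}u_n^q\,dx=\varepsilon_n^{\,N-2q/(p-2)}e_n^{N/2}\int_{B_R(0)}U_n^q\,dy$, and since $\varepsilon_n^{\,N-2q/(p-2)}=\lambda_n^{q/(p-2)-N/2}$ the prefactor $\lambda_n^{N/2-q/(p-2)}e_n^{-N/2}$ cancels it exactly, so the conclusion follows from $U_n\to U$ in $L^q_{\mathrm{loc}}$ and dominated convergence.
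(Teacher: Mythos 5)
Your proposal follows the same two-scale blow-up strategy as the paper's proof: compare the natural scale $\tilde{\varepsilon}_n\sqrt{e_n}$ with the scale $\varepsilon_n\sqrt{e_n}$, show the two are commensurable (your bound $s_n\leq C$ is exactly the paper's statement $\lambda_n\tilde{\varepsilon}_n^2\to\tilde{\lambda}\in(0,1]$), pass to the limit by elliptic estimates, transfer the Morse index bound $m(u_n)\le 2$ to the limit profile to obtain decay, and deduce (iv)--(v) by the same change-of-variables computations (your scaling identities for (iv) and (v) agree with the paper's). The genuine differences lie in the Liouville-type ingredients: to rule out the degenerate case $s_n\to+\infty$ (equivalently $\tilde{\lambda}=0$) you invoke the Gidas--Spruck theorem, which requires no Morse index information but uses $p-1<\frac{N+2}{N-2}$, whereas the paper invokes Bahri--Lions, which exploits the bound $m(u_n)\le2$; and to exclude half-space limits you propose moving-plane monotonicity in every dimension, whereas the paper uses moving planes only for $N=1$ and cites Esposito--Petralla for $2\le N\le3$. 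These substitutions are legitimate and essentially interchangeable.

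There is, however, one concrete gap. In your first and main step (the a priori bound $u_n(P_n)\le C\lambda_n^{1/(p-2)}$) you pass to the limit of $v_n$ and assert that it solves $-b\Delta v=v^{p-1}$ on $\mathbb{R}^N$. But at the scale $\tilde{\varepsilon}_n\sqrt{e_n}$ the rescaled domains need not exhaust $\mathbb{R}^N$: if $\tilde{\varepsilon}_n\sqrt{e_n}/\mathrm{dist}(P_n,\partial\Omega)$ stays bounded away from $0$, they converge to a half-space with $0$ at finite distance from its boundary, and the whole-space Gidas--Spruck theorem says nothing there. Your half-space exclusion appears only later, at the $\varepsilon_n$ scale, and logically cannot be used before step 1, since it presupposes the local boundedness of $U_n$ that step 1 is meant to provide; note, though, that at the $\tilde{\varepsilon}_n$ scale boundedness is free ($v_n\le1$), so the repair is simply to run the whole-space/half-space dichotomy at that scale, citing either the half-space version of Gidas--Spruck or your own moving-plane monotonicity argument applied to the limit with vanishing linear term. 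Handling exactly this dichotomy at the $\tilde{\varepsilon}_n$ scale is what occupies the bulk of the paper's proof. A smaller point: your mechanism for the decay of $U$ (a non-decaying bounded solution ``produces arbitrarily many disjoint localised negative directions'') is not correct as stated, because on regions where $0<U<(p-1)^{-1/(p-2)}$ the linearized form is positive on compactly supported functions, so non-decay at small levels yields no negative directions; the fact you actually need --- that bounded positive finite-Morse-index solutions of $-b\Delta U+U=U^{p-1}$ decay --- is true, but its proof goes through stability outside a compact set and integral estimates. Since the paper likewise invokes this fact without proof, this is a flaw in your justification rather than in the architecture of the argument.
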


\begin{proof}
Let $\tilde{U}_n$ be defined by
\[
\tilde{U}_n(y):=\tilde{\varepsilon}_n^\frac{2}{p-2}u_n(\tilde{\varepsilon}_n\sqrt{e_n}y+P_n) \ \ \ \text{for } y\in\tilde{\Omega}_n:=\frac{\Omega-P_n}{\tilde{\varepsilon}_n\sqrt{e_n}},
\]
and let $d_n$ denote $dist(P_n,\partial\Omega)$. In view of (\ref{1-29-1}), $\tilde{\varepsilon}_n\sqrt{e_n}\to 0$ as $n\to+\infty$.
Assume that
\[
\frac{\tilde{\varepsilon}_n\sqrt{e_n}}{d_n}\rightarrow L\in[0,+\infty].
 \]
Then $\tilde{\Omega}_n\rightarrow H$, where $H=\mathbb{R}^N$ if $L=0$, and $H$ represents a half-space with $0\in \overline{H}$ and $dist(0,\partial H)=\frac{1}{L}$ if $L>0$. Notice that $\tilde{U}_n$ satisfies
\begin{equation*}
\begin{cases}
-(ae^{-1}_n+b)\Delta\tilde{U}_n+\lambda_n\tilde{\varepsilon}_n^2\tilde{U}_n=\rho_n\tilde{U}_n^{p-1}  & \text{ in }\tilde{\Omega}_n,\\
0<\tilde{U}_n\leq \tilde{U}_n(0)=1  & \text{ in }\tilde{\Omega}_n\cap B_{R_n}(0), \\
\tilde{U}_n=0  & \text{ on }\partial\tilde{\Omega}_n,
\end{cases}
\end{equation*}
we derive
\[
0\leq-(ae^{-1}_n+b)\Delta\tilde{U}_n(0)=\rho_n-\lambda_n\tilde{\varepsilon}_n^2,
\]
which implies that $\lambda_n\tilde{\varepsilon}_n^2\in[0,1]$. Then, there exists $\tilde{\lambda}\in[0,1]$ such that up to subsequence,
\[
\lambda_n\tilde{\varepsilon}_n^2\rightarrow\tilde{\lambda} \ \ \ \text{as }n\rightarrow+\infty.
\]
So by elliptic regularity theory \cite{Gilbarg-1977}, we obtain that, up to subsequence, $\tilde{U}_n\rightarrow\tilde{U}$ in $C^1_{\text{loc}}(\bar{H})$ as $n\rightarrow+\infty$, where $\tilde{U}$ solves
\begin{equation}\label{N=1}
\begin{cases}
-b\Delta\tilde{U}+\tilde{\lambda}\tilde{U}=\tilde{U}^{p-1}  & \text{ in }H,\\
0<\tilde{U}\leq \tilde{U}(0)=1  & \text{ in }H, \\
\tilde{U}=0  & \text{ on }\partial H.
\end{cases}
\end{equation}

Now, we claim $m(\tilde{U})\leq2$. In fact, assume that $\phi_1,\phi_2\in C^\infty_0(H)$ are orthogonal in $L^2(H)$ such that
\[
b\int_{H}\vert\nabla\phi_i\vert^2dx+\int_{H}[\tilde{\lambda}-(p-1)\tilde{U}^{p-2}]\phi_i^2dx<0,~ \ \forall i=1,2.
\]
Define
\[
\phi_{i,n}(x):=\tilde{\varepsilon}_n^{-\frac{N-2}{2}}e_n^{-\frac{N}{4}}\phi_i\bigg(\frac{x-P_n}{\tilde{\varepsilon}_n\sqrt{e_n}}\bigg).
\]
Then it is easily seen that $\phi_{1,n}(x)$ and $\phi_{2,n}(x)$ are orthogonal in $L^2(\Omega)$, and
\begin{eqnarray*}
&&(a+be_n)\int_{\Omega}\vert\nabla\phi_{i,n}\vert^2dx+\int_{\Omega}[\lambda_n-(p-1)\rho_nu_n^{p-2}]\phi_{i,n}^2dx\notag\\
&=&(ae_n^{-1}+b)\int_{\tilde{\Omega}_n}\vert\nabla\phi_i\vert^2dx+\int_{\tilde{\Omega}_n}[\lambda_n\tilde{\varepsilon}_n^2-(p-1)\rho_n\tilde{U}_n^{p-2}]\phi_i^2dx\notag\\
&\rightarrow& b\int_{H}\vert\nabla\phi_i\vert^2dx+\int_{H}[\tilde{\lambda}-(p-1)\tilde{U}^{p-2}]\phi_i^2dx\notag\\
&<&0\notag
\end{eqnarray*}
as $n\rightarrow+\infty$ for all $i=1,2$. Thus, $m(\tilde{U})\leq m(u_n)\leq2$.

In the following, we shall prove that $H=\mathbb{R}^N$ and $\tilde{\lambda}>0$.

For the case $N=1$, we first prove $H=\mathbb{R}$. If not, we have $H=[-\frac{1}{L},+\infty)$. Since $\tilde{U}$ is of finite Morse index, using standard regularity theory, we deduce $\tilde{U}\to0$ as $\vert x\vert\to+\infty$. To produce a contradiction, we shall prove that $\tilde{U}$ is monotonically increasing with respect $x$ by the moving plane method. In fact, in this sense, $\tilde{\lambda}\geq0$ and $\tilde{U}$ satisfies
\begin{equation*}
\begin{cases}
-b\tilde{U}''+\tilde{\lambda}\tilde{U}=\tilde{U}^{p-1}  & \text{ in }[-\frac{1}{L},+\infty),\\
0<\tilde{U}\leq \tilde{U}(0)=1  & \text{ in }[-\frac{1}{L},+\infty), \\
\tilde{U}(-\frac{1}{L})=0.
\end{cases}
\end{equation*}
For $\sigma>-\frac{1}{L}$, we set,
\[
\Sigma_{\sigma}=\{x\ge -\frac{1}{L}\vert x<\sigma\} \ \ \ \text{and} \ \ \ x^{\sigma}=2\sigma-x.
\]
Define
\[
\tilde{U}_{\sigma}(x)=\tilde{U}(x^{\sigma}) \ \ \ \text{and} \ \ \ W_{\sigma}(x)=\tilde{U}_{\sigma}(x)-\tilde{U}(x).
\]
Notice that $\tilde{U}_{\sigma}$ satisfies the same equation as $\tilde{U}$, we can obtain that $W_{\sigma}$ satisfies
\[
-W_{\sigma}''+C(x,\sigma)W_{\sigma}=0, x\in \Sigma_{\sigma},
\]
where
\[
C(x,\sigma)=\frac{1}{b}\left(\tilde{\lambda}-\frac{\tilde{U}^{p-1}_{\sigma}(x)-\tilde{U}^{p-1}(x)}{\tilde{U}_{\sigma}(x)-\tilde{U}(x)}\right).
\]
Since $\tilde{U}$ is bounded in $\Sigma_{\sigma}$, there exists constant $C$ such that $\vert C(x,\sigma)\vert\leq C$.

We start the moving planes from $x=-\frac{1}{L}$.  Take $\sigma>-\frac{1}{L}$ to be close to $-\frac{1}{L}$ such that $C(x,\sigma)>-\frac{1}{l^2}$, where $l:=meas\left(\Sigma_{\sigma}\right)=\sigma+\frac{1}{L}$. Since $W_{\sigma}(x)\geq0$ on $\partial\Sigma_{\sigma}$, we can derive that $W_{\sigma}(x)\geq0$ for any $x\in\Sigma_{\sigma}$. In fact, for any $x\in\Sigma_{\sigma}$, we take $v(x)=\frac{W_{\sigma}(x)}{\psi(x)}$. Here
$\psi(x)=\sin\frac{x+\frac{1}{L}}{l}>0$ satisfies $ -\psi''=\frac{1}{l^2}\psi$.
Then, we can find a minimum point $x_0\in\Sigma_{\sigma}$ of $v$ such that $v(x_0)<0$ if the claim is not true. By direct calculation, it follows that
\[
-v''=2v'\frac{\psi'}{\psi}+\frac{1}{\psi}\left(-W''_{\sigma}+\frac{\psi''}{\psi}W_{\sigma}\right).
\]
Since $-v''(x_0)\leq0$ and $v'(x_0)=0$, we infer $-W''_{\sigma}(x_0)+\frac{\psi''}{\psi}W_{\sigma}(x_0)\le 0$. On the other hand, using $W_{\sigma}(x_0)<0$, we have
\begin{eqnarray*}
-W''_{\sigma}(x_0)+\frac{\psi''}{\psi}W_{\sigma}(x_0)&=&-W''_{\sigma}(x_0)-\frac{1}{l^2}W_{\sigma}(x_0)\\
&>&-W''_{\sigma}(x_0)+C(x_0,\sigma)W_{\sigma}(x_0)\\
&=&0,
\end{eqnarray*}
which gives a contradiction.

Next we continue the moving planes. Let us set
\[
\bar{\sigma}=\sup\{\sigma\vert W_{\sigma}(x)\geq0,\forall x\in \Sigma_{\sigma}\}.
\]
To prove $\tilde{U}$ is monotonically increasing with respect $x$, we have to prove that $\bar{\sigma}=+\infty$. Suppose by contradiction that $\bar{\sigma}<+\infty$. Then, we obtain that $W_{\bar{\sigma}}(x)=0$ for all $x\in \Sigma_{\bar{\sigma}}$. If not, there exists $\sigma>\bar{\sigma}$ such that $W_{\sigma}(x)\geq0$ holds which contradicts with the definition of $\bar{\sigma}$. Hence we get that
\[
\tilde{U}(2\bar{\sigma}+\frac{1}{L})=\tilde{U}(-\frac{1}{L})=0,
\]
this contradicts $\tilde{U}>0$ in $(-\frac{1}{L},+\infty)$. This implies that $H=\mathbb{R}$.

Furthermore, we will prove $\tilde{\lambda}>0$. By phase plane analysis, the equation $-b\tilde{U}^{''}+\tilde{U}^{p-1}=0$ has only periodic non-trivial sign-changing solution, so $\tilde{\lambda}=0$ is impossible.

For the case $2\leq N\leq3$, we point out that $\tilde{\lambda}>0$ holds no matter whether $H$ is half-space or whole space. Indeed, we only need to use \cite[Theorem 3]{Bahri-1992} to rule out the fact $\tilde{\lambda}=0$ since $\tilde{U}$ is not trivial. Then, from \cite[Theorem 1.1]{Esposito-2011}, we can derive $H=\mathbb{R}^N$.

Then, we conclude
$$\left(\frac{\tilde{\varepsilon}_n}{\varepsilon_n}\right)^2=\lambda_n\tilde{\varepsilon}_n^2\rightarrow\tilde{\lambda}\in(0,1] \ \ \ \text{as }n\rightarrow+\infty.$$
Since $U_n$ is a solution of
\begin{equation*}
\begin{cases}
-(ae^{-1}_n+b)\Delta U_n+U_n=\rho_n{U}_n^{p-1}  & \text{ in }{\Omega}_n,\\
0<{U}_n\leq {U}_n(0)=\left(\frac{\tilde{\varepsilon}_n}{\varepsilon_n}\right)^{-\frac{2}{p-2}}  & \text{ in }\Omega_n\cap B_{R_n\frac{\tilde{\varepsilon}_n}{\varepsilon_n}}(0), \\
{U}_n=0  & \text{ on }\partial{\Omega}_n,
\end{cases}
\end{equation*}
together with regularity theory and Sobolev embedding, up to subsequence if necessary, one has ${U}_n\rightarrow U$ in $C^1_{\text{loc}}(\bar{H})$ as $n\rightarrow+\infty$, where $U$ solves
\begin{equation*}
\begin{cases}
-b\Delta U+U=U^{p-1}  & \text{ in }H,\\
0<U\leq U(0)  & \text{ in }H, \\
U=0  & \text{ on }\partial H.
\end{cases}
\end{equation*}
Thus, processing in an analogous manner as before, one can see that
\[
\frac{\varepsilon_n\sqrt{e_n}}{d_n}\rightarrow0\text{ as }n\rightarrow+\infty,
\]
which gives $H=\mathbb{R}^N$.
Moreover, we infer $m({U})\leq\sup\limits_{n}m(u_n)\leq2$ and $U\to0$ as $\vert x\vert\to+\infty$. As a consequence, there exists $\phi\in C^\infty_0(\mathbb{R}^N)$ with supp$\phi\subset B_R(0)$ for some $R>0,$ such that
\[
b\int_{\mathbb{R}^N}\vert\nabla\phi\vert^2dx+\int_{\mathbb{R}^N}[1-(p-1)U^{p-2}]\phi^2dx<0.
\]
Considering the function
\[
\phi_{n}(x):={\varepsilon}_n^{-\frac{N-2}{2}}e_n^{-\frac{N}{4}}\phi\bigg(\frac{x-P_n}{{\varepsilon}_n\sqrt{e_n}}\bigg),
\]
one can easily see that $\phi_{n}(x)$ satisfies (\ref{blow1}) for $n$ large enough.

Finally, it suffices to prove (\ref{blow2}). By a change of variables in the integrals and Sobolev compact embeddings, we obtain
\[
\int_{B_{R}(0)}U^qdx=\lim_{n\rightarrow+\infty}\int_{B_{R}(0)}U_n^qdx=\lim_{n\rightarrow+\infty}\lambda_n^{\frac{N}{2}-\frac{q}{p-2}}e_n^{-\frac{N}{2}}\int_{B_{R\varepsilon_n\sqrt{e_n}}(P_n)}u_n^qdx
\]
for all $R>0$ and $q\geq1$.
\end{proof}

In what follows, we establish a global behaviour of $\{u_n\}$ and show that $\{u_n\}$ decays exponentially away from the local maximum points.

\begin{lemma}\label{lem:blow2}
Assume that $\lambda_n\rightarrow+\infty$ as $n\to+\infty$. Then there exist $k\in\{1,2\}$, and sequences of points $\{P^i_n\}$, $i=\{1,k\}$, such that
\begin{equation}\label{blow4}
\frac{\lambda_n}{e_n}\vert P^1_n-P^2_n\vert^2\rightarrow+\infty \ \ \ (\text{ if }k=2),
\end{equation}
\begin{equation}\label{blow5}
\frac{\lambda_n}{e_n}dist(P^i_n,\partial\Omega)^2\rightarrow+\infty,
\end{equation}
as $n\rightarrow+\infty$ and
\begin{equation}\label{blow6}
u_n(P_n^i)=\max_{\Omega\cap B_{R_n\lambda_n^{-1/2}\sqrt{e_n}}(P_n^i)}u_n,
\end{equation}
for some $R_n\rightarrow+\infty$ as $n\rightarrow+\infty$. Moreover, there holds
\begin{equation}\label{blow7}
u_n(x)\leq C\lambda_n^{\frac{1}{p-2}}\sum_{i=1}^{k}e^{-\gamma\lambda_n^{1/2}e_n^{-1/2}\vert x-P^i_n\vert}, \ \forall x\in\Omega, \ n\in\mathbb{N},
\end{equation}
for some $C,\gamma>0$.
\end{lemma}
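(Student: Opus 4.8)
The plan is to run a blow-up selection scheme for $\{u_n\}$ at the natural length scale $\mu_n:=\lambda_n^{-1/2}\sqrt{e_n}$ and amplitude $\lambda_n^{1/(p-2)}$ suggested by Lemmas \ref{lem:rel1}--\ref{lem:rel2}, with the uniform bound $m(u_n)\le2$ acting as the mechanism that terminates it. First I would take $P_n^1$ to be a global maximum point of $u_n$ on $\Omega$; Lemma \ref{lem:blow1} then provides the rescaled profile, the local maximality property (\ref{blow6}), and the interior-concentration statement of Lemma \ref{lem:blow1}(i), which upon squaring and using $\mu_n^2=e_n/\lambda_n$ is exactly (\ref{blow5}) with $i=1$. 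Inductively, having chosen $P_n^1,\dots,P_n^j$, I would test the weighted function
\[
W_n^{(j)}(x):=\lambda_n^{-\frac{1}{p-2}}\Big(\min_{1\le i\le j}\frac{|x-P_n^i|}{\mu_n}\Big)^{\frac{2}{p-2}}u_n(x).
\]
If $\sup_\Omega W_n^{(j)}$ stays bounded along the sequence, I stop and put $k=j$; otherwise I select $P_n^{j+1}$ near a near-maximizer of $W_n^{(j)}$. Since $\sup_\Omega W_n^{(j)}\to+\infty$ forces $\min_{i\le j}|P_n^{j+1}-P_n^i|/\mu_n\to+\infty$, the new point is genuinely separated from the previous ones, which yields (\ref{blow4}) and, via a further application of Lemma \ref{lem:blow1}, preserves (\ref{blow5}) and (\ref{blow6}) for the new index.

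The role of the Morse index is to force $k\le2$. Each selected $P_n^i$ produces, by Lemma \ref{lem:blow1}(iv), a function $\phi_{i,n}\in C_0^\infty(\Omega)$ supported in $B_{R\mu_n}(P_n^i)$ and satisfying (\ref{blow1}). By (\ref{blow4}) these supports are disjoint, so on the span of $\{\phi_{i,n}\}_{i=1}^{j}$ the constrained form $D^2J_n(u_n)$ is negative definite up to two small corrections: the off-diagonal nonlocal cross terms $2b\big(\int_\Omega\nabla u_n\cdot\nabla\phi_{i,n}\,dx\big)\big(\int_\Omega\nabla u_n\cdot\nabla\phi_{\ell,n}\,dx\big)$ coming from $J_n''$, and the projection of the $\phi_{i,n}$ onto $T_{u_n}\mathcal{S}_c$ (that is, subtracting their $L^2$-inner products with $u_n$). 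A direct rescaling shows that both corrections are controlled by positive powers of $\lambda_n^{-1}$ \emph{precisely because} $p>2+\frac{8}{N}$; this is where $L^2$-supercriticality is used. Consequently, for $n$ large $D^2J_n(u_n)$ is negative definite on a $j$-dimensional subspace of $T_{u_n}\mathcal{S}_c$, whence $j\le m(u_n)\le2$ and the scheme stops with $k\in\{1,2\}$.

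Termination yields the polynomial bound $u_n(x)\le C\lambda_n^{1/(p-2)}\big(\min_i|x-P_n^i|/\mu_n\big)^{-2/(p-2)}$ for $\min_i|x-P_n^i|\ge\mu_n$, i.e. decay away from the bubbles. To upgrade this to the exponential estimate (\ref{blow7}) I would argue by comparison: in the far region $\min_i|x-P_n^i|\ge R\mu_n$ with $R$ large the polynomial bound gives $\rho_n u_n^{p-2}\le\frac12\lambda_n$, so $u_n$ is a nonnegative subsolution of $-(a+be_n)\Delta w+\frac12\lambda_n w\le0$. Comparing $u_n$ with the supersolution $C\lambda_n^{1/(p-2)}\sum_{i}e^{-\gamma|x-P_n^i|/\mu_n}$ and invoking the maximum principle gives (\ref{blow7}); the decay rate is consistent because $a+be_n\sim be_n$ and $\mu_n^{-1}=\sqrt{\lambda_n/e_n}$, so $\sqrt{\lambda_n/(a+be_n)}\sim\lambda_n^{1/2}e_n^{-1/2}$ by Lemma \ref{lem:rel2}. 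Near the bubbles the inequality is automatic, since $u_n\le u_n(P_n^i)\sim\lambda_n^{1/(p-2)}$ there and the exponentials are bounded below.

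I expect the main obstacle to be the weighted boundedness $\sup_\Omega W_n^{(k)}<+\infty$ underlying both the termination and the polynomial decay: ruling out residual concentration outside the finitely many bubbles must be achieved through the Morse index rather than through any scaling invariance, which bounded domains lack. The Kirchhoff-specific difficulty is that the coefficient $a+be_n$ and the amplitude $\lambda_n^{1/(p-2)}$ both diverge, so all scales must be tracked through the relation $e_n^2\sim\lambda_n$ of Lemma \ref{lem:rel2}, while the smallness of the nonlocal contributions has to be read off from $p>2+\frac{8}{N}$; keeping these two bookkeeping devices consistent throughout the iteration is the crux of the argument.
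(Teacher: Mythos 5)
Your proposal shares the paper's architecture --- first point at the global maximum, iterative selection of further concentration points, termination at $k\le 2$ via disjointly supported test functions from Lemma \ref{lem:blow1}(iv) together with $m(u_n)\le 2$, then a comparison/maximum-principle argument on the far region giving (\ref{blow7}) --- and your final step is essentially the paper's Step 2. Your explicit treatment of the corrections in the Morse index count (the nonlocal terms $2b\big(\int_\Omega\nabla u_n\cdot\nabla\phi_{i,n}\,dx\big)\big(\int_\Omega\nabla u_n\cdot\nabla\phi_{\ell,n}\,dx\big)$, including the diagonal case $i=\ell$, and the $L^2$-projection onto $T_{u_n}\mathcal{S}_c$) is in fact more careful than the paper, which passes over both in silence; your scaling claim is correct: using $e_n\sim\lambda_n^{1/2}$ from Lemma \ref{lem:rel2}, all of these corrections are $O(\lambda_n^{\frac{2}{p-2}-\frac{N}{4}})$, and this exponent is negative precisely when $p>2+\frac{8}{N}$.

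The genuine gap is in your selection/termination mechanism. The paper terminates when the rescaled amplitude vanishes away from the bubbles, i.e. when $\lim_{R\to\infty}\limsup_n\big[\lambda_n^{-\frac{1}{p-2}}\max_{\{d_n(x)\ge R\mu_n\}}u_n\big]=0$ (its condition (\ref{blow8}), with your $\mu_n=\lambda_n^{-1/2}\sqrt{e_n}$); when this fails, the maximum point of $u_n$ over the far region $\{d_n\ge R\mu_n\}$ automatically has amplitude $\ge 2\delta\lambda_n^{\frac{1}{p-2}}$ and, after the separation step, is a genuine maximum of $u_n$ over a ball of divergent rescaled radius centred at it --- exactly the hypotheses of Lemma \ref{lem:blow1}. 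Your criterion, boundedness of $\sup_\Omega W_n^{(j)}$, is strictly stronger than (\ref{blow8}): $W_n^{(j)}$ blows up along any low-amplitude tail $u_n\sim\lambda_n^{\frac{1}{p-2}}(d_n/\mu_n)^{-s}$ with $0<s<\frac{2}{p-2}$, and such tails are perfectly compatible with (\ref{blow8}); a priori they cannot be excluded, since it is only the exponential estimate (\ref{blow7}) --- proved \emph{after} termination --- that rules them out. A near-maximizer of $W_n^{(j)}$ chosen in such a regime need not be a local maximum of $u_n$, so Lemma \ref{lem:rel1} is unavailable, and it can satisfy $u_n(P_n^{j+1})\ll\lambda_n^{\frac{1}{p-2}}$; then $\lambda_n u_n(P_n^{j+1})^{-(p-2)}\to+\infty$, the rescaled equation degenerates, no limit profile solving (\ref{eq5.2}) exists, and no negative direction for the quadratic form is produced. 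Consequently the Morse index cannot terminate your iteration: it caps the number of \emph{genuine} bubbles at two, but your scheme may keep selecting non-bubble points indefinitely, and at each such point you invoke conclusions of Lemma \ref{lem:blow1} (namely (\ref{blow5}), (\ref{blow6}) and the test function satisfying (\ref{blow1})) that simply do not follow. The minimal repair is to replace your termination criterion by the paper's vanishing criterion and to select new points as maxima of $u_n$ over $\{d_n\ge R\mu_n\}$; alternatively, if you insist on the weighted selection, you must add a doubling-lemma argument, prove that $\lambda_n u_n(P_n^{j+1})^{-(p-2)}$ remains bounded at the selected points (this can be done by integrating the rescaled equation against a cutoff and exploiting that the rescaled profile plus $C|y|^2$ is subharmonic), and establish a variant of Lemma \ref{lem:blow1} for almost-maxima rather than maxima. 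None of this is in your proposal.
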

\begin{proof}
The proof is divided into two steps.

\emph{Step 1  There exist $k\in\{1,2\}$, and sequences of points $\{P^i_n\}$, $i=\{1,k\}$ satisfying (\ref{blow4})-(\ref{blow6}) so that
\begin{equation}\label{blow8}
\lim_{R\rightarrow+\infty}\bigg(\limsup_{n\rightarrow+\infty}\bigg[\lambda_n^{-\frac{1}{p-2}}\max_{\{d_n(x)\geq R\varepsilon_n\sqrt{e_n}\}}u_n(x)\bigg]
\bigg)=0,
\end{equation}
where $d_n(x)=\min{\{\vert x-P_n^i\vert:i=1,k\}}$ is the distance function from $\{P_n^1,P_n^k\}$.}

Assume that $P_n^1$ is a point such that $u_n(P_n^1)=\max\limits_{\Omega}u_n$. If (\ref{blow8}) is satisfied for $P_n^1$, then we set $k=1$. Clearly $P_n^1$ satisfies (\ref{blow6}). By Lemma \ref{lem:blow1}, one can check that (\ref{blow5}) holds, which means the claim holds. Otherwise, if $P_n^1$ does not satisfy (\ref{blow8}), we suppose, for some $\delta>0$,
\[
\limsup_{R\rightarrow+\infty}\bigg(\limsup_{n\rightarrow+\infty}\bigg[\varepsilon_n^{\frac{2}{p-2}}\max_{\{\vert x-P^1_n\vert\geq R\varepsilon_n\sqrt{e_n}\}}u_n\bigg]\bigg)=4\delta>0.
\]
In view of Lemma \ref{lem:blow1}, we infer that up to subsequence,
\begin{equation}\label{blow9}
\varepsilon_n^{\frac{2}{p-2}}u_n(\varepsilon_n\sqrt{e_n}y+P_n^1):=U_n^1(y)\rightarrow U(y) \ \ \ \text{in }C^1_{\text{loc}}(\mathbb{R}^N)
\end{equation}
as $n\rightarrow+\infty$. Since $U\rightarrow0$ as $\vert x\vert\rightarrow+\infty$, there exists $R$ large such that
\begin{equation}\label{blow10}
U(y)\leq\delta, \ \ \ \forall\vert y\vert\geq R.
\end{equation}
Then, taking $R$ larger if necessary, we may suppose that, up to subsequence,
\begin{equation}\label{blow11}
\varepsilon_n^{\frac{2}{p-2}}\max_{\{\vert x-P^1_n\vert\geq R\varepsilon_n\sqrt{e_n}\}}u_n\geq2\delta.
\end{equation}
Considering $u_n=0$ on $\partial\Omega$, there exists $P_n^2\in \Omega\backslash B_{R\varepsilon_n\sqrt{e_n}}(P_n^1)$, such that
\[
u_n(P_n^2)=\max_{\Omega\backslash B_{R\varepsilon_n\sqrt{e_n}}(P_n^1)}u_n.
\]
Then we have
\begin{equation}\label{blow1-28}
\frac{\vert P_n^2-P_n^1\vert}{\varepsilon_n\sqrt{e_n}}\rightarrow+\infty.
\end{equation}
Indeed, suppose by contradiction that
\[
\frac{\vert P_n^2-P_n^1\vert}{\varepsilon_n\sqrt{e_n}}\rightarrow R'\geq R,
\]
using (\ref{blow9})-(\ref{blow10}) one has
\[
\varepsilon_n^{\frac{2}{p-2}}u_n(P_n^2)=U^1_n(\frac{P_n^2-P_n^1}{\varepsilon_n\sqrt{e_n}})\rightarrow U(R')\leq\delta,
\]
which contradicts to (\ref{blow11}). Hence (\ref{blow1-28}) is valid. As a consequence, we obtain (\ref{blow4}) for $\{P^1_n,P^2_n\}$.

 Putting
\[
\tilde{\varepsilon}_{n,2}=u_n(P_n^2)^{-\frac{p-2}{2}}  \ \ \ \text{and} \ \ \   R_{n,2}=\frac{1}{2}\frac{\vert P_n^2-P_n^1\vert}{\tilde{\varepsilon}_{n,2}\sqrt{e_n}},
\]
we conclude $\tilde{\varepsilon}_{n,2}\leq(2\delta)^{-\frac{p-2}{2}}\varepsilon_n$ by (\ref{blow11}), and thus
\[
R_{n,2}\geq\frac{(2\delta)^{\frac{p-2}{2}}}{2}\frac{\vert P_n^2-P_n^1\vert}{\varepsilon_n\sqrt{e_n}}\rightarrow+\infty,
\]
 which implies
\[
u_n(P_n^2)=\max_{\Omega\cap B_{R_{n,2}\tilde{\varepsilon}_{2,n}\sqrt{e_n}}(P_n^2)}u_n.
\]
In fact, using $\varepsilon_n\sqrt{e_n}<<\vert P_n^2-P_n^1\vert~$, it follows that
\[
\vert x-P_n^1\vert~\geq\vert P_n^2-P_n^1\vert-\vert x-P_n^2\vert\geq\frac{1}{2}\vert P_n^2-P_n^1\vert\geq R\varepsilon_n\sqrt{e_n}
\]
hold for all $x\in B_{R_{n,2}\tilde{\varepsilon}_{n,2}\sqrt{e_n}}(P_n^2)$.
Hence, $\Omega\cap B_{R_{n,2}\tilde{\varepsilon}_{n,2}\sqrt{e_n}}(P_n^2)\subset\Omega\backslash B_{R\varepsilon_n\sqrt{e_n}}(P_n^1)$. By Lemma \ref{lem:blow1} (i), we derive that (\ref{blow5}) and (\ref{blow6}) hold for $\{P_n^1, P_n^2\}$. Thus the conclusion is valid if (\ref{blow8}) holds. If not, we assume that the sequences $P_n^1,P^2_n,P_n^3$ satisfy (\ref{blow4})-(\ref{blow6}), but do not satisfy (\ref{blow8}). Similar to the above argument, there exists $R>0$ large enough such that up to subsequence,
\[
\varepsilon_n^{\frac{2}{p-2}}\max_{\{d_n(x)\geq R\varepsilon_n\sqrt{e_n}\}}u_n(x)\geq2\delta,
\]
where $d_n(x)=\min{\{\vert x-P_n^i\vert~:i=1,2\}}$. By Lemma \ref{lem:blow1} (iii), we deduce
\begin{equation}\label{blow12}
\varepsilon_n^{\frac{2}{p-2}}u_n(\varepsilon_n\sqrt{e_n}y+P_n^i):=U_n^i(y)\rightarrow U(y) \ \ \ \text{in }C^1_{\text{loc}}(\mathbb{R}^N)
\end{equation}
as $n\rightarrow+\infty$. Using $U\rightarrow0$ as $\vert x\vert~\rightarrow+\infty$ again, there exists $R$ large enough such that $U(y)\leq\delta$ for $\vert y\vert~\geq R$. Next, we reiterate the preceding arguments, replacing $\vert x-P_n^1\vert$ with $d_n(x)$. Let $P_n^{3}$ such that
\begin{equation}\label{blow13}
u_n(P_n^{3})=\max_{\{d_n(x)\geq R\varepsilon_n\sqrt{e_n}\}}u_n\geq2\delta\varepsilon_n^{-\frac{2}{p-2}}.
\end{equation}
Using (\ref{blow12}) it follows that
\[
\frac{\vert P_n^{3}-P_n^{i}\vert}{\varepsilon_n\sqrt{e_n}}\rightarrow+\infty \ \ \ \text{as }n\rightarrow+\infty
\]
for all $i=1,2$, so (\ref{blow4}) holds for $\{P_n^1,P_n^2,P_n^{3}\}$.

 Letting
\[\tilde{\varepsilon}_{n,3}=u_n(P_n^{3})^{-\frac{p-2}{2}}  \ \ \ \text{and} \ \ \  R_{n,3}=\frac{1}{2}\frac{d_n(P_n^{3})}{\tilde{\varepsilon}_{n,3}\sqrt{e_n}},
\]
 it follows from (\ref{blow13}) that
$\tilde{\varepsilon}_{n,3}\leq(2\delta)^{-\frac{p-2}{2}}\varepsilon_n$.
Then, we have $R_{n,3}\rightarrow+\infty$ as $n\rightarrow+\infty$. As before, taking into account
\[
u_n(P_n^{3})=\max_{\Omega\cap B_{R_{n,3}\tilde{\varepsilon}_{n,3}\sqrt{e_n}}(P_n^{3})}u_n,
\]
by Lemma \ref{lem:blow1}, we deduce that (\ref{blow5}) and (\ref{blow6}) are valid for $\{P_n^1,P_n^2,P_n^3\}$.

For $P_n^i,i=1,2,3$, using Lemma \ref{lem:blow1} again, we can find $\phi^i_n\in C^\infty_0(\Omega)$ with supp$\phi^i_n\subset B_{R\varepsilon_n\sqrt{e_n}}(P_n^i)$, $R>0$, such that
\[
(a+be_n)\int_\Omega\vert \nabla\phi^i_n\vert~^2dx+\int_\Omega[(\lambda-(p-1)\rho_nu_n^{p-2})(\phi^i_n)^2]dx<0.
\]
From (\ref{blow4}), we deduce $\phi^1_n,\phi^2_n,\phi_n^{3}$ are
mutually orthogonal when $n$ is large enough, which implies
\[
\lim_{n\rightarrow+\infty}m(u_n)\ge3,
\]
this gives a contradiction with $m(u_n)\leq2$. Therefore, the argument must end after at most two iterations.
\medskip

\emph{Step 2  There exist $\gamma, C>0$ such that
\[
u_n(x)\leq C\lambda_n^{\frac{1}{p-2}}\sum_{i=1}^{k}e^{-\gamma\lambda_n^{1/2}e_n^{-1/2}\vert x-P_n^i\vert~}, \ \ \ \forall x\in\Omega,n\in\mathbb{N}.
\]
}
From (\ref{blow8}), there exists $R>0$ large enough such that for $n\geq n(R)$
\[
\lambda_n^{-\frac{1}{p-2}}\max_{\{d_n(x)\geq R\lambda_n^{-1/2}\sqrt{e_n}\}}u_n(x)\leq\bigg(\frac{1}{4p}\bigg)^{\frac{1}{p-2}},
\]
where $d_n(x)=\min{\{\vert x-P_n^i\vert: i=1,k\}}$. Let $$A_n:=\{d_n(x)\geq R\lambda_n^{-1/2}\sqrt{e_n}\}$$ for $n\geq n(R)$. Then we get
\begin{equation}\label{blow14}
\frac{\lambda_n}{2}-p\rho_nu_n^{p-2}(x)\geq\frac{\lambda_n}{4}, \forall x\in A_n.
\end{equation}
Setting \
\[
\phi^i_n(x):=e^{-\gamma\lambda_n^{1/2}e_n^{-1/2}\vert x-P_n^i\vert},
\]
where $0<\gamma\leq\frac{1}{2\sqrt{1+b}}$,
by direct calculus, one has
\begin{eqnarray*}
&&[-(a+be_n)\Delta+\frac{\lambda_n}{2}-p\rho_nu_n^{p-2}(x)](\phi^i_n)\notag\\
&=&\lambda_n\phi^i_n\bigg[(ae_n^{-1}+b)\bigg(-\gamma^2+\frac{(N-1)\gamma}{\lambda_n^{1/2}e_n^{-1/2}\vert x-P_n^i\vert}\bigg)+\frac{1}{2}-p\rho_n\lambda_n^{-1}u_n^{p-2}(x)\bigg]\notag\\
&\geq&0\notag
\end{eqnarray*}
in $A_n$ for $n$ large. By $U(x)\to0$ as $\vert x\vert\to+\infty$, it follows that, for $R>0$ large enough,
\[
\bigg(e^{\gamma R}\phi^i_n(x)-\lambda_n^{-\frac{1}{p-2}}u_n(x)\bigg)\bigg\vert~_{\partial B_{R\lambda_n^{-1/2}\sqrt{e_n}}(P_n^i)}\rightarrow1-U(R)>0
\]
as $n\rightarrow+\infty$.

Next, we define
\[
\phi_n:=e^{\gamma R}\lambda_n^{\frac{1}{p-2}}\sum_{i=1}^{k}\phi^i_n.
\]
Considering $L_n=-(a+be_n)\Delta+\lambda_n-\rho_nu_n^{p-2}$, one can easily see that
\[
L_n(\phi_n-u_n)\geq0 \ \ \ \text{in } \{d_n(x)>R\lambda_n^{-1/2}\sqrt{e_n}\}
\]
and $\phi_n-u_n\geq0$ in $\partial A_n\cup\partial\Omega$. In view of (\ref{blow4})-(\ref{blow6}), we obtain
\[
\partial A_n=\bigcup_{i=1}^{k}\partial B_{R\lambda_n^{-1/2}\sqrt{e_n}}(P_n^i)\subset\Omega.
\]
 Then, it follows from the maximum principle that, for any $x\in A_n$,
\[
u_n(x)\leq\phi_n(x)=e^{\gamma R}\lambda_n^{\frac{1}{p-2}}\sum_{i=1}^{k}e^{-\gamma\lambda_n^{1/2}e_n^{-1/2}\vert x-P_n^i\vert~}
\]
By $\lambda_n\tilde{\varepsilon}_n\rightarrow\tilde{\lambda}$, there exists $C>0$, such that
\[
u_n(x)\leq\max_{\Omega}u_n=u_n(P_n)=(\tilde{\varepsilon}_n)^{-\frac{2}{p-2}}\leq Ce^{\gamma R}\lambda_n^{\frac{1}{p-2}}\sum_{i=1}^{k}e^{-\gamma\lambda_n^{1/2}e_n^{-1/2}\vert x-P_n^i\vert~}
\]
if $x\in A_n^c$ when $n$ is large enough. Thus, taking a subsequence if necessary, for some $C>0$, (\ref{blow7}) holds for any $n\in\mathbb{N}$ and the proof is complete.
\end{proof}

\section{Proof of Theorem 1}\label{sec:5}

In this section, we complete the proof of Theorem 1. Let $0<c\leq c^*$. In view of Lemma \ref{lem:mimax g}, using the monotonicity of $c_{\rho_n}$, we get
\[
J_1(w_1)\leq J_{\rho}(w_1)\leq c_{\rho_n}\leq c_{\frac{1}{2}},
\]
which implies the boundedness of $c_{\rho_n}$. Then, we have the following result.

\begin{lemma}\label{lem:boundedness}
Let $\{u_n\}\subset H^1_0(\Omega)$ be a sequence of solutions to (\ref{eq5.1}) for some $\{\lambda_n\}\subset\mathbb{R}$ and $\rho_n\rightarrow1^-$. Assume that
\[
\int_{\Omega}\vert u_n\vert^2dx=c \ \ \ \text{and} \ \ \ m(u_n)\leq2, \ \ \ \forall n,
\]
and assume the energy levels $\{c_n=J_{\rho_n}(u_n)\}$ is bounded. Then, the sequences $\{\lambda_n\}\subset\mathbb{R}$ and $\{u_n\}\subset H^1_0(\Omega)$ must be bounded. In addition, $\{u_n\}\subset \mathcal{S}_c$ is a bounded Palais-Smale sequence for $J$.
\end{lemma}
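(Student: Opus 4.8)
The plan is to reduce both boundedness assertions to a single claim: the sequence $\{\lambda_n\}$ is bounded from above. Indeed, the energy identity (\ref{energy}) together with $p>2+\frac{8}{N}>4$ shows that its left-hand side is nonnegative, whence $\lambda_n\geq -C$; and the same identity shows that an upper bound on $\{\lambda_n\}$ forces $\{e_n\}$ to be bounded, for otherwise the term $(\frac14-\frac1p)be_n^2$ would diverge. Since the $L^2$-mass is fixed at $c$, we have $\|u_n\|_{H^1_0(\Omega)}^2=e_n$ (up to the fixed additive mass $c$), so boundedness of $\{e_n\}$ is equivalent to boundedness of $\{u_n\}$ in $H^1_0(\Omega)$. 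Thus, once $\{\lambda_n\}$ is bounded above, the boundedness of $\{\lambda_n\}$, $\{e_n\}$ and $\{u_n\}$ all follow at once.

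To prove $\{\lambda_n\}$ is bounded above I argue by contradiction, assuming along a subsequence that $\lambda_n\to+\infty$. This is precisely the regime covered by the blow-up analysis, so Lemma \ref{lem:blow2} applies and yields the global pointwise bound (\ref{blow7}), namely $u_n(x)\leq C\lambda_n^{1/(p-2)}\sum_{i=1}^k e^{-\gamma\lambda_n^{1/2}e_n^{-1/2}|x-P_n^i|}$ with $k\leq 2$. The idea is to test this estimate against the conserved mass. Squaring, using $(\sum_{i=1}^k a_i)^2\leq k\sum_i a_i^2$, and integrating each exponential over $\mathbb{R}^N$ via the rescaling $z=\lambda_n^{1/2}e_n^{-1/2}(x-P_n^i)$, which produces a Jacobian factor $\lambda_n^{-N/2}e_n^{N/2}$, I obtain
\begin{equation*}
c=\int_\Omega u_n^2\,dx\leq C\,\lambda_n^{\frac{2}{p-2}}\,\lambda_n^{-\frac N2}\,e_n^{\frac N2}.
\end{equation*}
Invoking Lemma \ref{lem:rel2}, which in the regime $\lambda_n\to+\infty$ gives $e_n\leq C\lambda_n^{1/2}$ and hence $e_n^{N/2}\leq C\lambda_n^{N/4}$, this becomes $c\leq C\lambda_n^{\frac{2}{p-2}-\frac N4}$.

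The crux of the argument---and where the $L^2$-supercritical hypothesis is decisive---is the sign of the exponent: from $p>2+\frac{8}{N}$ we get $\frac{1}{p-2}<\frac N8$, so $\frac{2}{p-2}-\frac N4<0$. Consequently the right-hand side tends to $0$ as $\lambda_n\to+\infty$, contradicting $c>0$. This forces $\{\lambda_n\}$ to be bounded above, giving the boundedness of $\{\lambda_n\}$, $\{e_n\}$ and $\{u_n\}$ in $H^1_0(\Omega)$. To see finally that $\{u_n\}$ is a bounded Palais--Smale sequence for $J$ on $\mathcal{S}_c$, I observe that $u_n$ is an \emph{exact} constrained critical point of $J_{\rho_n}$, so $J_{\rho_n}'|_{\mathcal{S}_c}(u_n)=0$; since $J'(u_n)-J_{\rho_n}'(u_n)=(\rho_n-1)|u_n|^{p-2}u_n$ and $J(u_n)-J_{\rho_n}(u_n)=\frac{\rho_n-1}{p}\int_\Omega|u_n|^p\,dx$, with $\{u_n\}$ bounded and $\rho_n\to1^-$, both differences vanish in the relevant norms. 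Hence $\{J(u_n)\}$ is bounded and $\|J'|_{\mathcal{S}_c}(u_n)\|_*\to0$, as required.

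The main obstacle is entirely encapsulated in Lemma \ref{lem:blow2}: producing the uniform exponential decay (\ref{blow7}) with a controlled number $k\leq 2$ of blow-up points, a count tied to the Morse index bound $m(u_n)\leq 2$. Once that estimate is in hand, the contradiction above is a short computation whose only delicate point is verifying that $\frac{2}{p-2}-\frac N4<0$, which is exactly the $L^2$-supercritical condition; the scaling relation of Lemma \ref{lem:rel2} between $e_n$ and $\lambda_n$ is the other ingredient needed to convert the decay estimate into the mass bound.
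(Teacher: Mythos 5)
Your proposal is correct and follows essentially the paper's own route: reduce everything to an upper bound on $\{\lambda_n\}$ via the identity (\ref{energy}), then rule out $\lambda_n\to+\infty$ by combining Lemma \ref{lem:rel2} with the exponential decay estimate of Lemma \ref{lem:blow2}, the contradiction coming from the supercritical inequality $\frac{2}{p-2}<\frac{N}{4}$. The only cosmetic difference is that you integrate (\ref{blow7}) directly over $\Omega$ to force $c\le C\lambda_n^{\frac{2}{p-2}-\frac{N}{4}}\to 0$, whereas the paper writes the same inequality as the boundedness of the rescaled mass $\lambda_n^{\frac{N}{2}-\frac{2}{p-2}}e_n^{-\frac{N}{2}}c$ (via comparison with the local masses near the blow-up points) against its divergence.
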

\begin{proof}
By (\ref{energy}) we derive that if $\{\lambda_n\}$ is bounded, then $\{u_n\}$ is bounded in $H^1_0(\Omega)$ and thus a Palais-Smale sequence for $J$ constrained on $\mathcal{S}_c$.

It suffices to show the boundedness of $\{\lambda_n\}$. By contradiction, we assume that, up to a subsequence, $\lambda_n\rightarrow+\infty$. Let $P_n^i$ denotes the local maxium point of the blow up function $U_n^i$ as defined in Lemma \ref{lem:blow1}, and $P^i$ denotes the corresponding limit point, where $i=1,k$ with $k\in\{1,2\}$.

We claim that, for any $R>0$,
\begin{equation}\label{opro2}
\bigg\vert\lambda_n^{\frac{N}{2}-\frac{2}{p-2}}e_n^{-\frac{N}{2}}\int_{\Omega}u_n^2dx-\sum_{i=1}^{k}\int_{B_R({P}_n^i)}(U_n^i)^2dx\bigg\vert\rightarrow+\infty.
\end{equation}
Indeed, by Lemma \ref{lem:rel2} it follows that
\begin{equation}\label{opro-1-28}
\lambda_n^{\frac{N}{2}-\frac{2}{p-2}}e_n^{-\frac{N}{2}}\int_{\Omega}u_n^2dx=\lambda_n^{\frac{N}{2}-\frac{2}{p-2}}e_n^{-\frac{N}{2}}c\rightarrow+\infty.
\end{equation}
Using Lemma \ref{lem:blow1}, we deduce
\[
\sum_{i=1}^{k}\int_{B_R({P}_n^i)}(U_n^i)^2dx\rightarrow\int_{B_R({P}^i)}(U^i)^2dx,
\]
which together with (\ref{opro-1-28}) shows that (\ref{opro2}) holds.

On the other hand, from Lemma \ref{lem:blow2} we infer
\begin{eqnarray*}
&&\bigg\vert\lambda_n^{\frac{N}{2}-\frac{2}{p-2}}e_n^{-\frac{N}{2}}\int_{\Omega}u_n^2dx-\sum_{i=1}^{k}\int_{B_{R}({P}_n^i)}(U_n^i)^2dx\bigg\vert\\
&=&\lambda_n^{\frac{N}{2}-\frac{2}{p-2}}e_n^{-\frac{N}{2}}\bigg\vert\int_{\Omega}u_n^2dx-\sum_{i=1}^{k}\int_{B_{R\lambda_n^{-1/2}e_n^{1/2}}({P}_n^i)}u_n^2dx\bigg\vert~\notag\\
&=&\lambda_n^{\frac{N}{2}-\frac{2}{p-2}}e_n^{-\frac{N}{2}}\int_{\Omega\backslash \bigcup\limits_iB_{R\lambda_n^{-1/2}e_n^{1/2}}({P}_n^i)}u_n^2dx\notag\\
&\leq& C\lambda_n^{\frac{N}{2}}e_n^{-\frac{N}{2}}\sum_{i=1}^k\int_{\Omega\backslash B_{R\lambda_n^{-1/2}e_n^{1/2}}({P}_n^i)}e^{-\gamma\lambda_n^{1/2}e_n^{-1/2}\vert x-P^i_n\vert}dx\notag\\
&\leq& C\lambda_n^{\frac{N}{2}}e_n^{-\frac{N}{2}}\int_{R\lambda_n^{-1/2}e_n^{1/2}}^{+\infty}e^{-2\gamma\lambda_n^{1/2}e_n^{-1/2}y}dy\notag\\
&\leq& C\int_{R}^{+\infty}e^{-2\gamma z}dz\notag\\
&\leq& Ce^{-2\gamma R}.
\end{eqnarray*}
This implies that
\[
\limsup_{n\rightarrow+\infty}\bigg\vert \lambda_n^{\frac{N}{2}-\frac{2}{p-2}}e_n^{-\frac{N}{2}}\int_{\Omega}u_n^2dx-\sum_{i=1}^{k}\int_{B_R({P}_n^i)}(U_n^i)^2dx\bigg\vert~\leq Ce^{-2\gamma R},
\]
which provides a contradiction with (\ref{opro2}).
\end{proof}

\begin{proof}[Proof of Theorem \ref{thm:1}]
By Lemma \ref{lem:morse}, for any $c\in (0, c^*]$, there exists a sequence $\rho_n\rightarrow1^-$ and a corresponding sequence $\{u_{n}\}\subset \mathcal{S}_c$, mountain pass-type critical points of $J_{\rho_n}$, such that $J_{\rho_n}(u_n)=c_{\rho_n}$ and $m(u_{n})\leq2$. Applying Lemma \ref{lem:boundedness}, it follows that $\{u_{n}\}$ is a bounded Palais-Smale sequence for $J$. Then in view of Lemma \ref{lem:conv}, we obtain the conclusion.
\end{proof}

\section{Asymptotic behavior of $u_b$ as $b\to0$}\label{sec:6}

In this section, we investigate the asymptotic behavior of normalized solution $u_b$ for (\ref{thm:1}) as $b\rightarrow0$. Without loss of generality, let's assume that $b_n\in(0,1]$ with $b_n\rightarrow0$
in the sequel. Moreover, we assume that $(u_{b_n},\lambda_{b_n})\in \mathcal{S}^+_c\times\mathbb{R}$ solves
\begin{equation}\label{eq:asymptotic}
\begin{cases}
-\big(a+b_n\int_{\Omega}\vert \nabla u\vert^2dx\big)\Delta u+\lambda u=\vert u\vert^{p-2}u& \text{ in }\Omega,\\
u=0 & \text{ on }\partial\Omega
\end{cases}
\end{equation}
and $c_{b_n}=J(u_{b_n})$, where $\mathcal{S}_c^+:=\{u\in \mathcal{S}_c\vert~u>0\}$. For notational convenience, we set $(u_n,\lambda_n)$ and $c_n$ instead of $(u_{b_n},\lambda_{b_n})$ and $c_{b_n}$.

 At first, we investigate the monotonicity of $c_b$ as $b\in (0,1]$. For convenience of statement, we denote the energy functional $J$ by $J_{\rho, b}$.
\begin{lemma}\label{lem:monotonicity}
$c_b$ is nondecreasing with respect to $b\in (0,1]$.
\end{lemma}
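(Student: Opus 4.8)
The plan is to reduce the monotonicity of the minimax level $c_b$ to the elementary observation that, for each fixed function, the energy is monotone in the parameter $b$, and then to transfer this to the levels by comparing along a single, $b$-independent family of paths. Throughout I write $J_{1,b}$ for the functional $J = J_{\rho,b}$ evaluated at $\rho = 1$, so that $c_b$ is the mountain pass level associated with $J_{1,b}$.

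First I would record the pointwise comparison. For any $u \in \mathcal{S}_c$ and any $0 < b_1 \leq b_2 \leq 1$,
\[
J_{1,b_2}(u) - J_{1,b_1}(u) = \frac{b_2 - b_1}{4}\left(\int_{\Omega}|\nabla u|^2\,dx\right)^2 \geq 0,
\]
so that $J_{1,b_1}(u) \leq J_{1,b_2}(u)$ for every admissible $u$.

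The main step is to make sure that $c_b$ is realized over a class of admissible paths $\Gamma$ that does not depend on $b$. Inspecting the construction in Lemma \ref{lem:mimax g}, one sees that the thresholds $c^*$ and $\alpha_0$, and therefore the first endpoint $w_1 = \varphi_{\alpha_0/4}$, are already independent of $b$. For the second endpoint I would fix $w_2 = u_{k_0}$ by choosing the index $k_0$ so large that simultaneously $\int_{\Omega}|\nabla w_2|^2\,dx > 2c\alpha_0$ and $J_{1,1}(w_2) < 0$; since $b \mapsto J_{1,b}(w_2)$ is nondecreasing by the first step, it follows that $J_{1,b}(w_2) \leq J_{1,1}(w_2) < 0$ for every $b \in (0,1]$. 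With this common pair $(w_1,w_2)$, the quantitative estimates in Lemma \ref{lem:mimax g} show that the mountain pass geometry holds for every $b \in (0,1]$, so that
\[
c_b = \inf_{\gamma \in \Gamma}\ \max_{t \in [0,1]} J_{1,b}(\gamma(t)), \qquad \Gamma = \{\gamma \in C([0,1],\mathcal{S}_c) : \gamma(0) = w_1,\ \gamma(1) = w_2\},
\]
where $\Gamma$ is now the same for all $b \in (0,1]$.

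The conclusion then follows by a path-by-path comparison. Fix $0 < b_1 \leq b_2 \leq 1$. For any $\gamma \in \Gamma$, the first step gives $J_{1,b_1}(\gamma(t)) \leq J_{1,b_2}(\gamma(t))$ for all $t \in [0,1]$, hence $\max_{t} J_{1,b_1}(\gamma(t)) \leq \max_{t} J_{1,b_2}(\gamma(t))$. Taking the infimum over $\gamma \in \Gamma$ yields $c_{b_1} \leq c_{b_2}$, which is the assertion. I expect the only genuine point requiring care to be the $b$-uniform choice of endpoints in the main step: one must verify that freezing $(w_1,w_2)$, and hence $\Gamma$, produces exactly the levels $c_b$ furnished by Theorem \ref{thm:1}. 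Once this uniformity is in place, the pointwise monotonicity and the comparison of maxima are entirely routine.
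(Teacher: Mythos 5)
Your proof is correct and takes essentially the same approach as the paper: both arguments fix a $b$-independent path class $\Gamma$ (with endpoints $w_1,w_2$ from Lemma \ref{lem:mimax g} chosen uniformly in $b$) and deduce $c_{b_1}\leq c_{b_2}$ from the pointwise monotonicity $J_{1,b_1}(u)\leq J_{1,b_2}(u)$ by comparing maxima along each path and taking the infimum. Your explicit verification that $w_2$ can be frozen by choosing $k_0$ at $b=1$ (so that $J_{1,b}(w_2)\leq J_{1,1}(w_2)<0$ for all $b\in(0,1]$) is a detail the paper leaves implicit, but the underlying argument is identical.
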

\begin{proof}
Let $0<b_1\leq b_2\leq1$. From the proof of Lemma \ref{lem:mimax g}, we observe that the path set $\Gamma$ is independent of $b$. Then, for any $\gamma\in\Gamma$, we can obtain
\[
c_{b_1}:=\inf_{\gamma\in\Gamma}\max_{t\in[0,1]}J_{1,b_1}(\gamma(t))\leq \inf_{\gamma\in\Gamma}\max_{t\in[0,1]}J_{1,b_2}(\gamma(t)):=c_{b_2}.
\]
\end{proof}

By the above lemma, we obtain that the energy level $c_n$ is bounded since
\[
J_{1,0}(w_1)\leq J_{1,b_n}(w_1)\leq c_{n}\leq c_{1},
\]
where $w_1$ is given in Lemma \ref{lem:mimax g}.
By the identity (\ref{energy}) and $p>4$, we know that $\lambda_n$ is bounded from below, and furthermore we have $\{u_n\}$ is bounded, provided that $\{\lambda_n\}$ is bounded.

Assume by contradiction that $\lambda_n\to+\infty$. Using (\ref{energy}) again it follows that $e_n\to+\infty$. In what follows, we will distinguish two cases:
Case (i). $b_ne_n\rightarrow e\geq0$; Case (ii). $b_ne_n\to+\infty$.

For Case (i), we shall perform a blow up argument as in Section \ref{sec:4} to produce a contradiction.

Firstly, we study the behavior of the sequence $\{u_n\}$ near the local maximum points.

\begin{lemma}\label{lem:reblow1}
Assume that $b_ne_n\rightarrow e\geq0$ as $n\rightarrow+\infty$. Assume $Q_n\in\Omega$ be such that, for some $R_n\rightarrow+\infty$,
\[
u_n(Q_n)=\max_{\Omega\cap B_{R_n\tilde{\tau}_n}(Q_n)}u_n \ \ \ \text{ where }\tilde{\tau}_n=(u_n(Q_n))^{-\frac{p-2}{2}}\rightarrow0.
\]
Set
\[
v_n(y)=\tau_n^{\frac{2}{p-2}}u_n(\tau_ny+Q_n)\text{ for }y\in\Omega_n=\frac{\Omega-Q_n}{\tau_n}\text{ with } \tau_n=\lambda_n^{-\frac{1}{2}}.
\]
Then, up to a subsequence, we have
\begin{itemize}
\item[(i)] $\frac{\tau_n}{d(Q_n,\partial\Omega)}\rightarrow0$ as $n\rightarrow+\infty$.
\item[(ii)] $u_n(Q_n)=\max\limits_{\Omega\cap B_{R_n\tau_n}(Q_n)}u_n$ for some $R_n\rightarrow+\infty$.
\item[(iii)] $v_n\rightarrow V$ in $C^1_{\text{loc}}(\mathbb{R}^N)$ as $n\rightarrow+\infty$, where $V$ solves
\begin{equation*}
\begin{cases}
-(a+e)\Delta V+V=V^{p-1}  & \text{ \rm{in} }\mathbb{R}^N,\\
0<V\leq V(0)  & \text{ \rm{in} }\mathbb{R}^N, \\
V\rightarrow0  & \text{ \rm{as} }\vert x\vert\rightarrow+\infty.
\end{cases}
\end{equation*}
\item[(iv)] There exists $\phi_n\in C^\infty_0(\Omega)$ with supp$\phi_n\subset B_{R\tau_n}(Q_n)$, $R>0$, such that for $n$ large
\begin{equation*}
(a+b_ne_n)\int_\Omega\vert \nabla\phi_n\vert^2dx+\int_\Omega[(\lambda-(p-1)u_n^{p-2})\phi_n^2]dx<0.
\end{equation*}
\item[(v)] For all $R>0$ and $q\geq1$ there holds
\begin{equation*}
\lim_{n\rightarrow+\infty}\lambda_n^{\frac{N}{2}-\frac{q}{p-2}}\int_{B_{R\tau_n}(Q_n)}u_n^qdx=\int_{B_{R}(0)}V^qdx.
\end{equation*}
\end{itemize}
\end{lemma}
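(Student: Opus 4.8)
The plan is to reproduce the blow-up analysis of Lemma~\ref{lem:blow1}, taking advantage of the fact that in Case (i) the Kirchhoff coefficient $a+b_ne_n$ remains bounded (because $b_ne_n\to e$). This is precisely why here the rescaling is carried out at the scale $\tau_n=\lambda_n^{-1/2}$ alone, without the extra factor $\sqrt{e_n}$ needed in Section~\ref{sec:4}. I would first pass through the canonically normalized blow-up
\[
\tilde v_n(y):=\tilde\tau_n^{\frac{2}{p-2}}u_n(\tilde\tau_ny+Q_n),\qquad y\in\tilde\Omega_n:=\frac{\Omega-Q_n}{\tilde\tau_n},
\]
which satisfies $\tilde v_n(0)=1$; writing $d_n:=d(Q_n,\partial\Omega)$ and assuming, up to a subsequence, $\tilde\tau_n/d_n\to L\in[0,+\infty]$, the rescaled domains converge to $H$, with $H=\mathbb{R}^N$ when $L=0$ and $H$ a half-space with $d(0,\partial H)=1/L$ when $L>0$. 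Substituting into (\ref{eq:asymptotic}) yields
\[
-(a+b_ne_n)\Delta\tilde v_n+\lambda_n\tilde\tau_n^2\,\tilde v_n=\tilde v_n^{p-1}\ \ \text{in }\tilde\Omega_n,\qquad 0<\tilde v_n\le\tilde v_n(0)=1 .
\]

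Evaluating this equation at the interior maximum $y=0$ gives $0\le-(a+b_ne_n)\Delta\tilde v_n(0)=1-\lambda_n\tilde\tau_n^2$, so $\lambda_n\tilde\tau_n^2\in[0,1]$ and, along a subsequence, $\lambda_n\tilde\tau_n^2\to\tilde\lambda\in[0,1]$ together with $b_ne_n\to e$. Elliptic regularity then gives $\tilde v_n\to\tilde V$ in $C^1_{\mathrm{loc}}(\overline{H})$, where $\tilde V$ solves $-(a+e)\Delta\tilde V+\tilde\lambda\tilde V=\tilde V^{p-1}$ in $H$ with $0<\tilde V\le\tilde V(0)=1$ and $\tilde V=0$ on $\partial H$. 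The bounded-Morse-index transfer is identical to Lemma~\ref{lem:blow1}: rescaling test functions by $\phi_{i,n}(x)=\tilde\tau_n^{-\frac{N-2}{2}}\phi_i\big((x-Q_n)/\tilde\tau_n\big)$ leaves the Dirichlet energy scale-invariant and sends the quadratic form associated with $u_n$ to $(a+e)\int_H|\nabla\phi_i|^2+\int_H[\tilde\lambda-(p-1)\tilde V^{p-2}]\phi_i^2$, whence $m(\tilde V)\le\sup_nm(u_n)\le2$.

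I expect the main obstacle to be, exactly as in Lemma~\ref{lem:blow1}, excluding the degenerate alternatives $\tilde\lambda=0$ and $H\ne\mathbb{R}^N$. For $2\le N\le3$ I would invoke the Liouville-type results already used there: since $\tilde V$ is a nontrivial solution of finite Morse index, \cite[Theorem 3]{Bahri-1992} rules out $\tilde\lambda=0$ and \cite[Theorem 1.1]{Esposito-2011} forces $H=\mathbb{R}^N$. For $N=1$, where these theorems are unavailable, I would run the moving-plane argument on the half-line to show that a half-space profile would be strictly increasing, contradicting $\tilde V(-1/L)=0$ with $\tilde V>0$ inside, thereby forcing $H=\mathbb{R}$; a phase-plane analysis of $-(a+e)\tilde V''=\tilde V^{p-1}$ then excludes $\tilde\lambda=0$. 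This settles $\tilde\lambda\in(0,1]$ and $H=\mathbb{R}^N$.

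It remains to transfer everything back to the stated rescaling $v_n$. Since $v_n(y)=(\tau_n/\tilde\tau_n)^{2/(p-2)}\tilde v_n\big((\tau_n/\tilde\tau_n)y\big)$ and $(\tilde\tau_n/\tau_n)^2=\lambda_n\tilde\tau_n^2\to\tilde\lambda\in(0,1]$, the two scales differ only by the bounded factor $\tilde\lambda^{-1/2}$, so $v_n\to V$ in $C^1_{\mathrm{loc}}(\mathbb{R}^N)$ with $V(y)=\tilde\lambda^{-1/(p-2)}\tilde V(\tilde\lambda^{-1/2}y)$, and a one-line computation verifies that $V$ solves $-(a+e)\Delta V+V=V^{p-1}$, establishing (iii). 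Assertion (i) follows because $H=\mathbb{R}^N$ means $L=0$, hence $\tau_n/d_n=(\tau_n/\tilde\tau_n)(\tilde\tau_n/d_n)\to0$; (ii) is the maximality of $Q_n$ read in the $\tau_n$-scale. For (iv) I would use $V\to0$ at infinity together with $m(V)\ge1$ (visible by testing the limit equation against $V$ itself, which gives $(2-p)\int V^p<0$) to produce $\phi\in C_0^\infty(\mathbb{R}^N)$ with $(a+e)\int|\nabla\phi|^2+\int[1-(p-1)V^{p-2}]\phi^2<0$; its rescaling $\phi_n(x)=\tau_n^{-(N-2)/2}\phi((x-Q_n)/\tau_n)$ then satisfies the required inequality for large $n$, since $\lambda_n\tau_n^2=1$. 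Finally, (v) is obtained by the change of variables $x=\tau_ny+Q_n$ and the $C^1_{\mathrm{loc}}$ convergence, just as in the closing step of Lemma~\ref{lem:blow1}.
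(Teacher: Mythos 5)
Your proposal is correct and takes essentially the same approach as the paper: blow up at the canonical scale $\tilde\tau_n$, derive the limit problem on $H$ with $\lambda_n\tilde\tau_n^2\to\hat\lambda\in[0,1]$ and coefficient $a+e$, exclude the half-space and $\hat\lambda=0$ exactly as in Lemma~\ref{lem:blow1} (Bahri--Lions and Esposito--Petralla for $2\le N\le 3$, moving planes plus phase-plane analysis for $N=1$), and then transfer to the $\tau_n$-scale. The paper's own proof is precisely this outline, delegating the details to Lemma~\ref{lem:blow1}; your version merely fills them in (including the explicit negativity computation $(2-p)\int V^p<0$ justifying item (iv)).
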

\begin{proof}
Let $\tilde{v}_n$ be defined by
\[
\tilde{v}_n(y)=\tilde{\tau}_n^{\frac{2}{p-2}}u_n(\tilde{\tau}_ny+Q_n) \ \ \ \text{for } y\in\tilde{\Omega}_n:=\frac{\Omega-Q_n}{\tilde{\tau}_n}.
\]
Notice that the function $\tilde{v}_n$ satisfies
\begin{equation*}
\begin{cases}
-(a+b_ne_n)\Delta\tilde{v}_n+\lambda_n\tilde{\tau}_n^2\tilde{v}_n=\tilde{v}_n^{p-1}  & \text{ in }\tilde{\Omega}_n,\\
0<\tilde{v}_n\leq \tilde{v}_n(0)=1  & \text{ in }\tilde{\Omega}_n\cap B_{R_n}(0), \\
\tilde{v}_n=0  & \text{ on }\partial\tilde{\Omega}_n.
\end{cases}
\end{equation*}
Since $Q_n$ is the local maximum point of $v_n$, we obtain
\[
0\leq-(a+b_ne_n)\Delta\tilde{v}_n(0)=1-\lambda_n\tilde{\tau}_n^2,
\]
which gives $\lambda_n\tilde{\tau}_n^2\in[0,1]$. Then, there exists $\hat\lambda\in[0,1]$ such that up to subsequence, $\lambda_n\tilde{\tau}_n\to\hat\lambda$ as $n\to+\infty$. Now by elliptic regularity, we can derive that up to subsequence, $\tilde{v}_n\to\tilde{v}$ in $C^1_{\text{loc}}(\bar{H})$ as $n\rightarrow+\infty$, where $\tilde{v}$ satisfies
\begin{equation*}
\begin{cases}
-(a+e)\Delta\tilde{v}+\hat\lambda\tilde{v}=\tilde{v}^{p-1}  & \text{ in }H,\\
0<\tilde{v}\leq \tilde{v}(0)=1  & \text{ in }H, \\
\tilde{v}=0  & \text{ on }\partial H.
\end{cases}
\end{equation*}
Here $H$ is a half-space or $\mathbb{R}^N$. By employing similar arguments as those in Lemma \ref{lem:blow1}, we deduce that $H=\mathbb{R}^N$ and $\hat\lambda>0$. Furthermore, we point out that the remaining part of the proof can be finalized  by drawing parallels to the proof of Lemma \ref{lem:blow1}.
\end{proof}

Furthermore, we give the global behavior of the sequence $\{u_n\}$. Since the proof is analogous to Lemma \ref{lem:blow2}, so we only propose lemma here and omit the proof.

\begin{lemma}\label{lem:reblow2}
Assume that $b_ne_n\rightarrow e\geq0$ as $n\rightarrow+\infty$. Then, there exists $k\in\{1,2\}$, and sequences of points $\{Q^i_n\}$, $i=\{1,k\}$, such that
\begin{equation*}
\lambda_n \vert Q^1_n-Q^2_n\vert^2\rightarrow+\infty \ \ \ (\text{ if }k=2),
\end{equation*}
\begin{equation*}
\lambda_ndist(Q^i_n,\partial\Omega)^2\rightarrow+\infty,
\end{equation*}
as $n\rightarrow+\infty$ and
\begin{equation*}
u_n(Q_n^i)=\max_{\Omega\cap B_{R_n\lambda_n^{-1/2}}(Q_n^i)}u_n,
\end{equation*}
for some $R_n\rightarrow+\infty$ as $n\rightarrow+\infty$. Moreover, there holds
\begin{equation*}
u_n(x)\leq C\lambda_n^{\frac{1}{p-2}}\sum_{i=1}^{k}e^{-\gamma\lambda_n^{1/2}\vert x-Q^i_n\vert~}, \ \forall x\in\Omega, \ n\in\mathbb{N},
\end{equation*}
for some $C,\gamma>0$.
\end{lemma}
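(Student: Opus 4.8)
The plan is to reproduce the two-step argument of Lemma~\ref{lem:blow2}, replacing the single-bubble length scale $\varepsilon_n\sqrt{e_n}$ by $\tau_n=\lambda_n^{-1/2}$ and the coefficient $a+be_n$ by $a+b_ne_n$. Under the Case~(i) hypothesis $b_ne_n\to e$, this coefficient converges to the finite, strictly positive constant $a+e$, so the rescaled limit operator $-(a+e)\Delta+\,\cdot\,$ furnished by Lemma~\ref{lem:reblow1}(iii) is uniformly elliptic with positive zeroth order part. Lemma~\ref{lem:reblow1} then plays exactly the role that Lemma~\ref{lem:blow1} played in Section~\ref{sec:4}, and the whole argument transcribes; the task is to verify that each scale-dependent estimate survives the substitution.

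\textbf{Step 1 (at most two concentration points).} I would select local maxima inductively. Let $Q_n^1$ realise $\max_\Omega u_n$, and write $d_n(x):=\min_i|x-Q_n^i|$ for the distance to the currently selected points. If
\[
\lim_{R\to+\infty}\Big(\limsup_{n\to+\infty}\big[\lambda_n^{-\frac{1}{p-2}}\max_{\{d_n(x)\ge R\tau_n\}}u_n(x)\big]\Big)=0
\]
holds with this single point, set $k=1$; Lemma~\ref{lem:reblow1}(i)--(ii) then give the max property and $\lambda_n\,dist(Q_n^1,\partial\Omega)^2\to+\infty$. Otherwise there is $\delta>0$ with $\lambda_n^{-\frac{1}{p-2}}\max_{\{|x-Q_n^1|\ge R\tau_n\}}u_n\ge2\delta$ for all large $R$, and I pick $Q_n^2$ realising $\max_{\Omega\setminus B_{R\tau_n}(Q_n^1)}u_n$. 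The convergence $v_n\to V$ with $V(y)\to0$ as $|y|\to\infty$ forces $|Q_n^2-Q_n^1|/\tau_n\to+\infty$, i.e. $\lambda_n|Q_n^1-Q_n^2|^2\to+\infty$, and Lemma~\ref{lem:reblow1} applied at $Q_n^2$ yields the remaining properties. Should a third point $Q_n^3$ be forced, Lemma~\ref{lem:reblow1}(iv) provides $\phi_n^i\in C_0^\infty(\Omega)$ with $\mathrm{supp}\,\phi_n^i\subset B_{R\tau_n}(Q_n^i)$ and
\[
(a+b_ne_n)\int_\Omega|\nabla\phi_n^i|^2\,dx+\int_\Omega[\lambda_n-(p-1)u_n^{p-2}](\phi_n^i)^2\,dx<0;
\]
the separations $\lambda_n|Q_n^i-Q_n^j|^2\to+\infty$ make the supports pairwise disjoint for large $n$, so $\phi_n^1,\phi_n^2,\phi_n^3$ span a three-dimensional $L^2$-orthogonal negative subspace of the constrained Hessian, giving $m(u_n)\ge3$ and contradicting $m(u_n)\le2$. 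Hence $k\in\{1,2\}$.

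\textbf{Step 2 (exponential decay).} On $A_n:=\{d_n(x)\ge R\tau_n\}$ the decay from Step~1 yields $\tfrac{\lambda_n}{2}-pu_n^{p-2}\ge\tfrac{\lambda_n}{4}$ for $R$ large. Taking $\phi_n^i(x)=e^{-\gamma\lambda_n^{1/2}|x-Q_n^i|}$, a direct computation gives
\begin{equation*}
\begin{aligned}
\Big[-(a+b_ne_n)\Delta+\tfrac{\lambda_n}{2}-pu_n^{p-2}\Big]\phi_n^i
&=\lambda_n\phi_n^i\Big[(a+b_ne_n)\big(-\gamma^2+\tfrac{(N-1)\gamma}{\lambda_n^{1/2}|x-Q_n^i|}\big)\\
&\qquad+\tfrac12-p\lambda_n^{-1}u_n^{p-2}\Big]\ge0
\end{aligned}
\end{equation*}
in $A_n$ for $n$ large, once $\gamma$ is fixed so small that $(a+e+1)\gamma^2\le\tfrac14$. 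Setting $\phi_n:=e^{\gamma R}\lambda_n^{1/(p-2)}\sum_{i=1}^k\phi_n^i$ and $L_n:=-(a+b_ne_n)\Delta+\lambda_n-u_n^{p-2}$, one checks $L_n(\phi_n-u_n)\ge0$ in $\{d_n(x)>R\tau_n\}$, while $\phi_n-u_n\ge0$ on $\partial A_n\cup\partial\Omega$: indeed $u_n=0$ on $\partial\Omega$, and on each sphere $\partial B_{R\tau_n}(Q_n^i)$ one has $e^{\gamma R}\phi_n^i\equiv1$ (since $\lambda_n^{1/2}\tau_n=1$) whereas $\lambda_n^{-1/(p-2)}u_n\to V(\cdot)\le\delta<1$. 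The maximum principle then gives the stated bound on $A_n$, and on $A_n^c$ the global ceiling $u_n\le\max_\Omega u_n=u_n(Q_n^1)\le C\lambda_n^{1/(p-2)}$ (using $\lambda_n\tilde{\tau}_n^2\to\hat\lambda>0$) closes the estimate.

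The argument carries no genuinely new difficulty beyond Lemma~\ref{lem:blow2}; the point to monitor is precisely the coefficient $a+b_ne_n$. In Section~\ref{sec:4} it degenerated to $b$ after rescaling because $e_n\to+\infty$, whereas here, by Case~(i), it stabilises at $a+e\ge a>0$. This is in fact the favourable situation: the uniform ellipticity and the strictly positive limit of the zeroth order term permit a single choice of decay rate $\gamma$ and make the supersolution inequality in Step~2 hold uniformly in $n$, so no extra care is needed for the limit profile $V$ or the nonlocal term. The only real bookkeeping is to ensure that the Morse-index count in Step~1 uses the separations at the new scale $\tau_n$ rather than $\varepsilon_n\sqrt{e_n}$, which is exactly what the rescaled hypotheses of Lemma~\ref{lem:reblow1} provide.
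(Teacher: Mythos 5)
Your proposal is correct and takes essentially the same approach as the paper: the paper itself omits this proof, stating only that it is analogous to Lemma \ref{lem:blow2}, and your transcription of that two-step argument --- with the scale $\varepsilon_n\sqrt{e_n}$ replaced by $\tau_n=\lambda_n^{-1/2}$, the coefficient $a+be_n$ replaced by $a+b_ne_n\rightarrow a+e$, the factor $\rho_n$ set to $1$, and Lemma \ref{lem:reblow1} playing the role of Lemma \ref{lem:blow1} --- is exactly the intended argument. Your choice of $\gamma$ with $(a+e+1)\gamma^2\le\frac{1}{4}$ is the correct adaptation of the paper's condition $\gamma\le\frac{1}{2\sqrt{1+b}}$ to the now non-degenerating coefficient, and the remaining bookkeeping (disjoint supports at scale $\tau_n$ for the Morse-index contradiction, the comparison argument on $A_n$, and the ceiling $u_n\le C\lambda_n^{1/(p-2)}$ on $A_n^c$ from $\lambda_n\tilde{\tau}_n^2\rightarrow\hat\lambda>0$) matches the paper's Lemma \ref{lem:blow2} step by step.
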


At this point, we can exploit the proof of Theorem \ref{thm:1} with minor changes to find the contradiction (here we make use of Lemma \ref{lem:reblow1} and Lemma \ref{lem:reblow2} instead of Lemma \ref{lem:blow1} and Lemma \ref{lem:blow2}). Therefore, $\{\lambda_n\}$ is bounded in the case $b_ne_n\to e\geq0$.

Next, let's investigate Case (ii), i.e., $b_ne_n\to+\infty$. Notice that the function $\tilde{V}_n$ defined by
\[
\tilde{V}_n(y):=\tilde{\tau}_n^\frac{2}{p-2}u_n(\tilde{\tau}_n\sqrt{e_n}y+Q_n) \ \ \ \text{for } y\in\tilde{\Omega}_n:=\frac{\Omega-Q_n}{\tilde{\tau}_n\sqrt{e_n}},
\]
satisfies
\begin{equation*}
\begin{cases}
-(ae^{-1}_n+b_n)\Delta\tilde{V}_n+\lambda_n\tilde{\tau}_n^2\tilde{V}_n=\tilde{V}_n^{p-1}  & \text{ in }\tilde{\Omega}_n,\\
0<\tilde{V}_n\leq \tilde{V}_n(0)=1  & \text{ in }\tilde{\Omega}_n\cap B_{R_n}(0), \\
\tilde{V}_n=0  & \text{ on }\partial\tilde{\Omega}_n.
\end{cases}
\end{equation*}
Similar to the proof of Lemma \ref{lem:blow1}, we can derive $\lambda_n\tilde{\tau}_n^2\to \check\lambda\in[0,1]$ and $\tilde{V}_n\to\tilde{V}$ in $C^1_{\text{loc}}(\bar{H})$ as $n\rightarrow+\infty$, where $\tilde{V}$ satisfies
\begin{equation*}
\begin{cases}
\check \lambda\tilde{V}=\tilde{V}^{p-1}  & \text{ in }H,\\
0<\tilde{V}\leq \tilde{V}(0)=1  & \text{ in }H, \\
\tilde{V}=0  & \text{ on }\partial H.
\end{cases}
\end{equation*}
Here $H$ is a half-space or $H=\mathbb{R}^N$, which is impossible. As a consequence, $\{\lambda_n\}$ is bounded as $b_ne_n\to+\infty$.

\begin{proof}[Completion of proof of Theorem \ref{thm:2}]
Based on the aforementioned analysis, we can obtain that $\{\lambda_n\}$ is bounded. Then, there exists $\lambda_0\in\mathbb{R}$ such that up to a subsequence, $\lambda_n\rightarrow \lambda_0$. From (\ref{energy}), we derive that $\{u_n\}$ is bounded. By Lemma \ref{lem:conv}, there exists $u_0\in \mathcal{S}^+_c$ such that
\begin{eqnarray*}
&& u_n\rightarrow u_0~~\mbox{in}~~ H_0^{1}(\Omega),\\
&& u_n\rightarrow u_0~~\mbox{in}~~L^{p}(\Omega),~~p\in[2,2^\ast),\\
&& u_n(x)\rightarrow u_0(x)~~\mbox{for a.e. }x\in\Omega.
\end{eqnarray*}
Since $\{u_n,\lambda_n\}$ solves (\ref{eq:asymptotic}), for any $v\in H^1_0(\Omega)$, we have
\[
a\int_{\Omega}\nabla u_n\nabla vdx+b_n\int_{\Omega}\vert \nabla u_n\vert^2dx\int_{\Omega}\nabla u_n\nabla vdx+\lambda_n\int_{\Omega}u_nvdx=\int_{\Omega}\vert u_n\vert^{p-2}u_nvdx.
\]
Thus, one can see that
\[
a\int_{\Omega}\nabla u_0\nabla vdx+\lambda_0\int_{\Omega}u_0vdx=\int_{\Omega}\vert u_0\vert^{p-2}u_0vdx,
\]
which implies that $u_0\subset \mathcal{S}_c^+$ is a positive normalized solution of (\ref{e2}).
This completes the proof.
\end{proof}

\section*{Acknowledgments}\setcounter{equation}{0}
This work is supported by NSFC Grant (No. 12471102, 11471067) and the Beijing Natural Science Foundation (No. 1242007).
\\

{\small
}
\end{document}